\newcommand\F{{\mathbb F}}
\newcommand\Q{{\mathbb Q}}
\newcommand\Z{{\mathbb Z}}
\newtheorem{theorem}{Theorem}[section]
\newtheorem{lemma}[theorem]{Lemma}
\newtheorem{corollary}[theorem]{Corollary}
\newtheorem{conjecture}[theorem]{Conjecture}
\newtheorem{question}[theorem]{Question}
\theoremstyle{definition}
\newtheorem{remark}[theorem]{Remark}
\numberwithin{equation}{section}
\begin{document}

\title[]{The distance to square-free polynomials}

\author{Art\= uras Dubickas}
\address{Institute of Mathematics, Faculty of Mathematics and Informatics, Vilnius University, 
Naugarduko 24, Vilnius LT-03225,  Lithuania}
\email{arturas.dubickas@mif.vu.lt}

\author{Min Sha}
\address{Department of Computing, Macquarie University, Sydney, NSW 2109, Australia}
\email{shamin2010@gmail.com}

\subjclass[2010]{11C08, 11T06}



\keywords{Integer polynomial, square-free polynomial, Tur{\' a}n's problem}

\begin{abstract} 
In this paper, we consider a variant of Tur{\' a}n's problem on the distance from an integer polynomial in $\Z[x]$ to the nea\-rest irreducible polynomial in $\Z[x]$. 
We prove that for any polynomial $f \in \Z[x]$, there exist infinitely many square-free polynomials $g\in \Z[x]$ 
such that $L(f-g) \le 2$, where $L(f-g)$ denotes the sum of the absolute values of the coefficients of $f-g$. On the other hand, we show that this inequality cannot be replaced by $L(f-g) \le 1$. 
For this, for each integer $d \geq 15$ 
we construct infinitely many polynomials
$f \in \Z[x]$ of degree $d$ such that neither $f$ itself nor any $f(x) \pm x^k$, where
$k$ is a non-negative integer, is square-free. Polynomials over prime fields and their distances to square-free polynomials are also considered. 
\end{abstract}

\maketitle

\section{Introduction}

For an integer polynomial $f(x)= a_dx^d + a_{d-1}x^{d-1} + \cdots + a_0\in \Z[x]$ of degree $d\ge 1$, 
its \textit{length} $L(f)$ is defined by 
$$
L(f) = |a_d| + |a_{d-1}| + \cdots + |a_0|, 
$$
and its \textit{height} $H(f)$ by  
$$
H(f) = \max \{|a_d|, |a_{d-1}|, \ldots, |a_0| \}.
$$
In 1960s, Tur{\' a}n \cite{Schinzel67} asked if
there exists an absolute constant $C$ such that for any polynomial $f \in \Z[x]$, 
there is an irreducible   (over the rational numbers) polynomial $g \in \Z[x]$ of degree at most $\deg f$ satisfying $L(f-g) \le C$.

Although Tur{\' a}n's problem remains open, a number of partial results have been obtained. See, for instance, a recent review of Filaseta \cite{filas}. 
In 1970, Schinzel \cite{Schinzel1970} proved that $C = 3$ suffices if one removes the condition on the degree of $g$. More precisely,
he showed that if $f \in \Z[x]$ is of degree $d$, then there are infinitely many irreducible polynomials $g \in \Z[x]$ such that 
\begin{equation*}
L(f-g) \le \left\{
\begin{array}{rl}
2 & \textrm{if $f(0) \ne 0$,}\\
3 & \textrm{always,}
\end{array} \right.
\end{equation*}
and, moreover, at least one of them satisfies 
 $$
 \deg g \le \exp((5d+7)(\|f\| +3)), 
 $$
where $\|f\|$ stands for the sum of the squares of the coefficients of $f$. 
In \cite{BF}, Banerjee and Filaseta improved the above upper bound to 
$$
 \deg g \le 8 \max\{d+3,c_0 \}5^{8\|f\| +9}, 
$$
where $c_0$ is an effectively computable absolute constant. 
In addition, using computational strategies, it has been confirmed in \cite{BH1,BH2,FM,LRW,Moss} that 
if $f \in \Z[x]$ has degree $d \le 40$, then there exists an irreducible polynomial $g \in \Z[x]$ with $\deg g = d$ and $L(f - g) \le 5$.
On the other hand, although the trivial example $f(x)=x^3$ shows that $C \geq 2$, it is not known that the optimal constant $C$ should be strictly greater than $2$. 

In this paper, we consider a variant of Tur{\' a}n's problem, where ``irreducible polynomial $g$" is replaced by ``square-free polynomial $g$". 
For this, we pose the following conjecture: 

\begin{conjecture}   \label{conj:Turan}
For any $f \in \Z[x]$ of degree $d$, 
there is a square-free  polynomial $g \in \Z[x]$ of degree at most $d$ satisfying $L(f-g) \le 2$.  
\end{conjecture}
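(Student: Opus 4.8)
The plan is to reduce the conjecture to a finite, explicit search and then to an avoidance statement for the discriminant locus. Since $\deg g\le d$ and $\deg f=d$, writing $\delta=f-g$ gives $\deg\delta\le d$, and $L(\delta)\le 2$ forces $\delta$ into the finite list
\[
\delta\in\{0\}\cup\{\pm x^{j}:0\le j\le d\}\cup\{\pm 2x^{j}:0\le j\le d\}\cup\{\pm x^{i}\pm x^{j}:0\le i<j\le d\},
\]
so there are only $O(d^{2})$ candidate polynomials $g=f-\delta$. If $f$ itself is square-free we are done; otherwise the task is to exhibit one candidate $g$ with $\operatorname{disc}(g)\neq 0$, that is, one lattice point of this small ``box'' of perturbations that avoids the hypersurface $\{\operatorname{disc}=0\}$.

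The cleanest tool I would use is reduction modulo a prime. If $p\nmid a_{d}$ and $g\bmod p$ is square-free in $\F_{p}[x]$, then $\operatorname{disc}(g)\not\equiv 0\pmod p$, hence $\operatorname{disc}(g)\neq 0$ and $g$ is square-free over $\Z$. Thus it suffices to find a single prime $p\nmid a_{d}$ and one perturbation $\delta$ as above with $(f-\delta)\bmod p$ square-free in $\F_{p}[x]$. This is where the prime-field results announced in the paper would enter: over $\F_{p}$ the square-free polynomials of degree $d$ have density $1-1/p$, so heuristically an $L$-budget of $2$ ought to be ample, and the real content is to make such an ``escape'' explicit and uniform in $f$.

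To get concrete traction I would exploit perturbations that fix the derivative. For $g=f+c$ one has $g'=f'$, so every repeated root of $g$ is a root of $f'$; since for each root $\alpha$ of $f'$ there is a unique $c=-f(\alpha)$ making $g(\alpha)=0$, at most $d-1$ values of $c$ give a non-square-free $g$. Working modulo a prime $p>d-1$ this already produces a square-free $g\bmod p$ of the form $f+c$ for some $c\in\F_{p}$, but \emph{not} necessarily with $|c|\le 2$, which is exactly the obstruction. I would therefore pass to the two-coefficient moves: for $g_{s,t}=f+s\,x^{i}+t\,x^{j}$ the quantity $\operatorname{disc}(g_{s,t})$ is a polynomial in $(s,t)$ of degree at most $2d-2$, so its vanishing locus is a plane curve, and the goal becomes to show that the $O(1)$ admissible residue points with $|s|+|t|\le 2$ cannot all lie on this curve. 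I would try to bound the number of such points by combining the parametrization $\alpha\mapsto(s,t)$ of the non-square-free locus (coming from the pair $g_{s,t}(\alpha)=g_{s,t}'(\alpha)=0$) with a judicious choice of the exponent pair $(i,j)$.

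The main obstacle is precisely this last avoidance step. Because the budget is only $L\le 2$, each coordinate move offers merely $O(1)$ choices, whereas the discriminant locus, although of codimension one, can be maneuvered by the adversary: this is exactly what the paper's degree-$\ge 15$ examples do for the budget $L\le 1$, placing $f$ and every $f\pm x^{k}$ simultaneously on $\{\operatorname{disc}=0\}$. Proving the conjecture amounts to showing that the strictly richer family available at budget $2$—the doublings $\pm 2x^{j}$ together with the genuinely two-dimensional moves $\pm x^{i}\pm x^{j}$—cannot be blocked all at once, i.e.\ that the associated divisibility and discriminant conditions are arithmetically incompatible. Establishing this incompatibility uniformly in $f$ and $d$ is, I expect, the crux, and is the reason the statement is offered here only as a conjecture.
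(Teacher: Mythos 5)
You are attempting to prove a statement that the paper itself explicitly leaves open: Conjecture~\ref{conj:Turan} is posed, not proved. The authors establish it only for $d\le 5$ (via Theorem~\ref{badd}), prove weakened versions in which the degree restriction $\deg g\le d$ is abandoned (Theorems~\ref{thm:general} and~\ref{thm:special}), and verify it for special classes of polynomials by reduction modulo $2$ (Theorem~\ref{thm:binary}). To your credit, your proposal ends by conceding that the decisive avoidance step is unproven, so there is no pretense of completeness; but it is worth saying precisely why the surrounding reductions do not narrow the gap. The reduction to the finite list of $O(d^2)$ candidates $g=f-\delta$ is immediate from the definitions and carries no content. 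The reduction modulo a prime trades the conjecture for a statement that is itself open: finding $\delta$ with $L(\delta)\le 2$ such that $(f-\delta)\bmod p$ is square-free in $\F_p[x]$, with the same degree constraint, is exactly the paper's Question~\ref{conj:Turan-p}, and Section~\ref{sec:prime} shows that for every prime $p$ a positive proportion of polynomials in $\F_p[x]$ genuinely require distance $\ge 2$, so the budget has no slack mod $p$. Worse, square-freeness modulo one prime is sufficient but not necessary for square-freeness over $\Z$, so your strategy is strictly harder than the original problem: even settling Question~\ref{conj:Turan-p} negatively would leave Conjecture~\ref{conj:Turan} untouched, and settling it positively is at least as hard as the conjecture itself.

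The concrete gap is in the two-coefficient avoidance step. Your constant moves $g=f+c$ reproduce exactly the paper's proof of Theorem~\ref{badd} (each bad $c$ corresponds to a root of $f'$, giving at most $d-1$ bad constants), which settles the conjecture only for $d\le 5$, where the five constants $c\in\{0,\pm 1,\pm 2\}$ suffice. For the moves $g_{s,t}=f+sx^i+tx^j$, the observation that $\{\operatorname{disc}(g_{s,t})=0\}$ is a plane curve proves nothing: a curve can perfectly well contain all $O(1)$ admissible points $(s,t)$, and the paper's Section~\ref{sec:L1} shows concretely how an adversary arranges this at budget $1$ — the degree-$15$ examples are built by the Chinese Remainder Theorem from the simultaneous congruences \eqref{eq:congruence}, placing $f$ and every $f\pm x^k$ on the discriminant locus at once. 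Nothing in your sketch prevents an analogous CRT construction from blocking any \emph{fixed} finite family of budget-$2$ moves (one imposes one square-modulus congruence per move, with pairwise resultant conditions as in the paper's remark), so your ``judicious choice of the exponent pair $(i,j)$'' must vary with $f$, and you supply no mechanism producing such a pair uniformly. That missing mechanism is precisely the open content of the conjecture; what you have written is a reasonable research program, not a proof.
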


Another problem related to Tur\'an's problem is that of Szegedy as\-king if there exists a constant $C_0$ depending only on $d$ such that for any
$f \in \Z[x]$ of degree $d$, the polynomial $f(x)+t$ is irreducible for some $t \in \Z$ with
$|t| \leq C_0$. In general, the problem of Szegedy is still open; see the papers of Gy\H{o}ry \cite{BH10} and Hajdu \cite{BH11}. 
However, in our setting, when ``irreducible" is replaced by ``square-free", this problem becomes very simple. 
One can take, for instance, $C_0=\lfloor d/2 \rfloor$.

\begin{theorem}\label{badd}
For any $f \in \Z[x]$ of degree $d$, at least one of the polynomials
$f(x)+t$, where $t \in \Z$ satisfies $|t| \leq \lfloor d/2 \rfloor$, 
is square-free. 
\end{theorem}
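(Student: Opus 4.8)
The plan is to show that among the $2\lfloor d/2\rfloor+1$ shifted polynomials $f(x)+t$, at least one is square-free, by bounding how many of them can fail to be square-free. A polynomial $h\in\Z[x]$ fails to be square-free precisely when $\gcd(h,h')\neq 1$ in $\Q[x]$, equivalently when $h$ and its derivative $h'$ share a common root. Crucially, shifting $f$ by a constant does not change the derivative: $(f+t)'=f'$. So every $f(x)+t$ that is not square-free must have a root in common with the fixed polynomial $f'(x)$, which has degree $d-1$ and hence at most $d-1$ distinct roots.

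First I would make this precise. Suppose $f(x)+t$ has a repeated factor; then there is some $\alpha\in\C$ (a root of multiplicity $\ge 2$) with $f(\alpha)+t=0$ and $f'(\alpha)=0$. Thus $\alpha$ lies among the at most $d-1$ roots of $f'$, and $t=-f(\alpha)$ is determined by $\alpha$. The key observation is that distinct values of $t$ require distinct values of $f(\alpha)$, so the number of ``bad'' values of $t$ is at most the number of distinct values taken by $-f(\alpha)$ as $\alpha$ ranges over the roots of $f'$. This is bounded by the number of distinct roots of $f'$, which is at most $d-1$.

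Next I would count more carefully to get the stated bound $\lfloor d/2\rfloor$. The interval $|t|\le \lfloor d/2\rfloor$ contains exactly $2\lfloor d/2\rfloor+1$ integers, which is $d+1$ when $d$ is even and $d$ when $d$ is odd; in either case this is at least $d$. Since at most $d-1$ values of $t$ can be bad, there is at least one good (square-free) shift in the range, provided the counting of bad $t$ is indeed bounded by $d-1$. The one subtlety is ensuring that different roots $\alpha$ of $f'$ do not need to be counted separately when they give the same $t$, and that roots producing a genuine repeated factor of $f+t$ are the only source of non-square-freeness; both follow from the derivative criterion above. I would also treat the degenerate possibility that $f'$ is identically zero, but in characteristic $0$ this forces $\deg f=0$, excluded by $d\ge 1$.

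I expect the main obstacle to be squeezing the bound from the crude ``at most $d-1$ bad values'' down to fit inside the window $|t|\le\lfloor d/2\rfloor$, since $d-1$ is close to the window size $d$ (or $d+1$). The argument as sketched already gives a good shift in a symmetric window of $d$ integers, so the real content is the pigeonhole comparison $\#\{t:|t|\le\lfloor d/2\rfloor\}\ge d > d-1\ge \#\{\text{bad }t\}$; I would verify the parity cases of $\lfloor d/2\rfloor$ explicitly to confirm that $2\lfloor d/2\rfloor+1\ge d$ holds for every $d\ge 1$, which closes the argument.
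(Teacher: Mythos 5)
Your proof is correct and takes essentially the same approach as the paper: both arguments exploit that $(f+t)'=f'$ is independent of $t$, bound the number of bad shifts by $\deg f'\le d-1$ (the paper counts distinct square factors $h_t$ dividing $f'$, while you count distinct repeated roots of $f'$, which is the same bound), and finish by pigeonhole since the window $|t|\le\lfloor d/2\rfloor$ contains at least $d$ integers.
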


\begin{proof}
Let $S$ be a subset of $\Z$  with the property
that for each integer $t \in S$ some $h_t^2$, where $h_t \in \Z[x]$ is of degree at least $1$, divides the polynomial $f(x)+t$. Then, $h_t \ne h_s$ when $t \ne s$ both belong to $S$, since otherwise $h_t \mid (t-s)$, a contradiction. Also, $h_t$ divides the derivative $f^\prime$ for every $t \in S$, so the cardinality of the set $S$ does not exceed $\deg f^\prime \leq d-1$.
The assertion of the theorem now follows, because the set $\{-\lfloor d/2 \rfloor, \dots, 0, \dots, \lfloor d/2 \rfloor\}$ contains
at least $d$ integers. 
\end{proof}

Note that Theorem~\ref{badd} implies Conjecture~\ref{conj:Turan}
for polynomials of degree $d \leq 5$. Moreover, for $d=2$ and $d=3$, the inequality $L(f-g) \leq 2$ in Conjecture~\ref{conj:Turan} can be replaced by $L(f-g) \leq 1$. 
However, in general, the condition $L(f-g) \le 2$ of Conjecture \ref{conj:Turan} cannot be relaxed.

\begin{theorem}   \label{thm:L1}
For any integer $d \ge 15$, there exist infinitely many polynomials $f \in \Z[x]$ of degree $d$ 
such that each polynomial $g \in \Z[x]$ satisfying $L(f-g) \le 1$
is not square-free.
\end{theorem}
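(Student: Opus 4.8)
The plan is to first translate the length condition into a concrete finite list of polynomials, and then to force each polynomial on that list to have a repeated factor.

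Since $L(f-g)$ is the sum of the absolute values of the coefficients of $f-g$, the inequality $L(f-g)\le 1$ holds precisely when $g=f$ or $g=f\pm x^{k}$ for some integer $k\ge 0$. Thus I must produce an $f$ of degree $d$ for which $f$ itself and every $f\pm x^{k}$ is non-square-free. The first move is to require $x^{2}\mid f$, i.e. $f(0)=f'(0)=0$. Then $x^{2}$ divides $f$ and divides $f\pm x^{k}=x^{2}(f/x^{2}\pm x^{k-2})$ for every $k\ge 2$, so all of these, as well as $f$ itself, are non-square-free at one stroke. It remains only to arrange that the four polynomials $f+1,\ f-1,\ f+x,\ f-x$ are non-square-free.

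For each of these four I would impose a repeated factor through a congruence on $f$: writing $q_1^2\mid f+1$, $q_2^2\mid f-1$, $q_3^2\mid f+x$, $q_4^2\mid f-x$ as $f\equiv -1\ (\mathrm{mod}\ q_1^2)$, $f\equiv 1\ (\mathrm{mod}\ q_2^2)$, $f\equiv -x\ (\mathrm{mod}\ q_3^2)$, $f\equiv x\ (\mathrm{mod}\ q_4^2)$, together with $f\equiv 0\ (\mathrm{mod}\ x^2)$. Here one should note that linear repeated factors are too rigid: because $x^2\mid f$ we have $c^2\mid f(c)$ for any integer $c$, and since $f(c)\in\{\pm1,\pm c\}$ at an integer double root this forces $c=\pm1$; a short $2$-adic (parity) computation then shows that no two of the four polynomials can acquire integer double roots simultaneously. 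Consequently at most one of the $q_i$ may be taken linear, say $q_4=x-1$ giving a genuine double root of $f-x$ at $1$, while $q_1,q_2,q_3$ must be irreducible quadratics. Choosing these pairwise coprime and coprime to $x$ and to $x-1$, the Chinese Remainder Theorem over $\Q[x]$ determines $f$ modulo the product $M$ of the moduli, of degree $2+2+3\cdot 4=16$. Hence a solution of degree at most $15$ exists, which is exactly the source of the hypothesis $d\ge 15$; for $d\ge 16$ one may also add a term $M(x)t(x)$, and for each fixed $d$ infinitely many examples are produced by letting the auxiliary quadratics $q_1,q_2,q_3$ vary.

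The step I expect to be the main obstacle is integrality: the Chinese Remainder solution lives a priori in $\Q[x]$ and need not have integer coefficients. I would control this by choosing the repeated factors so that the relevant resultants are coprime to the obstructing primes, which makes the moduli comaximal in $\Z[x]$ and forces the solution into $\Z[x]$. The pair $x^2,(x-1)^2$ already has resultant $1$, whereas $x-1$ and $x+1$ give resultant $16$; this is precisely the $2$-adic obstruction noted above, and it explains why one cannot use both $(x-1)^2$ and $(x+1)^2$ and why only a single linear double root is admissible. The remaining technical points are to verify that infinitely many admissible triples of irreducible quadratics $q_1,q_2,q_3$ satisfy the finitely many congruence conditions guaranteeing integrality and coprimality, and that for each such choice the resulting $f$ has degree exactly $d$ with nonzero leading coefficient. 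These should be routine once the integrality bookkeeping is in place, and it is that bookkeeping where I expect the real work to lie.
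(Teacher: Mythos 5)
Your opening reduction is exactly the paper's: impose $x^2 \mid f$ so that $f$ and every $f \pm x^k$ with $k \ge 2$ fail to be square-free at once, then force repeated factors into $f+1$, $f-1$, $f+x$, $f-x$ by congruences modulo squares, and solve by the Chinese Remainder Theorem while controlling integrality. But the execution has a genuine error and a genuine gap. The error: your claim that ``at most one of the $q_i$ may be taken linear'' is false, because your rigidity argument (that $x^2 \mid f$ forces $c^2 \mid f(c)$ at an integer double root $c$) only excludes \emph{monic} linear factors, i.e.\ integer double roots. A non-monic linear factor such as $2x+1$ has the rational, non-integer double root $-1/2$, about which that argument says nothing. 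The paper's construction in fact uses four such factors, imposing $f \equiv -1 \pmod{(2x+1)^2}$, $f \equiv x \pmod{(2x-1)^2}$, $f \equiv 1 \pmod{(6x+1)^2}$, $f \equiv -x \pmod{(6x-1)^2}$ together with $f \equiv 0 \pmod{x^2}$, so that $\pm 1/2$ and $\pm 1/6$ are the multiple roots.

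The gap: the step you defer as ``routine bookkeeping'' is where the whole proof lives, and along the lines you describe it cannot be completed. First note that for \emph{any} solution $f \in \Z[x]$ of your system, every modulus $q$ must satisfy $q(0), q(1) \in \{\pm 1\}$: if a prime $p$ divided $q_1(0)$, say, then modulo $p$ one would get $x^2 \mid f+1$ and $x^2 \mid f$, hence $x^2 \mid 1$; the other five conditions follow similarly by playing the congruences against $f \equiv 0 \pmod{x^2}$ and $f \equiv x \pmod{(x-1)^2}$. For monic quadratics these constraints leave only four candidates, $x^2+x-1$, $x^2-x-1$, $x^2-x+1$, $x^2-3x+1$, and all four reduce to $x^2+x+1$ modulo $2$. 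Consequently every pairwise resultant among them equals $\pm 4$ (never a unit, so your unit-resultant criterion is unsatisfiable), and worse, the system itself is $2$-adically inconsistent: if $q_1 \equiv q_3 \equiv x^2+x+1 \pmod 2$, then reducing $f \equiv -1 \pmod{q_1^2}$ and $f \equiv -x \pmod{q_3^2}$ modulo $2$ forces $(x^2+x+1)^2 \mid x+1$ in $\F_2[x]$, which is absurd. So with monic quadratics there is no integer solution at all. Non-monic quadratics can rescue the plan --- for instance $x$, $x-1$, $2x^2-1$, $2x^2-2x+1$, $2x^2-2x-1$ are pairwise comaximal in $\Z[x]$ --- but their pairwise resultants are $8$, $-8$, $16$, so comaximality must be verified directly rather than through resultants, and you never exhibit a single admissible choice, let alone the infinitely many triples your infinitude claim at $d=15$ requires. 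The paper sidesteps all of this: its moduli are \emph{not} pairwise comaximal (the CRT solution $f_0$ lies only in $\Q[x]$), and integrality is restored by adding $h f_1$, where $h$ is the product of the moduli and $f_1$ runs over a set of polynomials with coefficients in prescribed cosets of $\Z$; these infinitely many choices of $f_1$ also deliver infinitely many examples of degree exactly $15$ for free. (The variant you are implicitly aiming at --- moduli chosen so that CRT works directly over $\Z[x]$ --- is the referee's approach recorded in the paper's Remark, which uses the list $x^2$, $(x-1)^2$, $(2x-1)^2$, $(x^2+x-1)^2$, $(x^3-x^2-2x+1)^2$ with pairwise resultants $1$; note that it too needs a non-monic linear factor and a cubic, not three quadratics.)
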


As one can see from the proof given in Section~\ref{sec:L1}, one example of such degree $15$ polynomials is 
\begin{equation}   \label{eq:example}
\begin{split}
f(x) = & 15552x^{15} + 5184x^{14} + 5616x^{13} + 8784x^{12} + 13908x^{11} \\
& + 13756x^{10} + 96413x^9 - 18929x^8 - 57229x^7 + 6851x^6 \\ 
& + 9435x^5 - 932x^4 - 346x^3 + 36x^2.
\end{split}
\end{equation}
Its root $0$ has multiplicity $2$. Also, $0$ is a root of multiplicity $2$ of any polynomial $f(x) \pm x^k$, where $k \geq 2$ is an integer, 
whereas $1/2, -1/2, 1/6$ and $-1/6$ are multiple roots of $f(x)-x, f(x)+1, f(x)+x$ and $f(x)-1$, respectively.  
We do not claim that $d=15$ is the smallest  degree of the polynomials satisfying the conditions of Theorem~\ref{thm:L1}.

In Section~\ref{sec:general}, we prove a weak form of Conjecture \ref{conj:Turan} by relaxing the condition on the degree of $g$: 

\begin{theorem}   \label{thm:general}
For any $f \in \Z[x]$ of degree $d$ and any integer 
\begin{equation}\label{klop}
n > L(f'), 
\end{equation}
there is a square-free polynomial $g\in \Z[x]$ satisfying $\deg g =n$ and 
\begin{equation*}
L(f-g) = \left\{
\begin{array}{rl}
1 & \textrm{if $x^2 \nmid f(x)$,}\\
2 & \textrm{always.}
\end{array} \right.
\end{equation*}
\end{theorem}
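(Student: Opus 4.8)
The plan is to isolate the following key fact and then read off both cases of the theorem from it: \emph{if $n > L(f')$ and $x^2 \nmid f$, then $g(x) = f(x) + x^n$ is square-free.} Granting this, the theorem follows quickly. Note first that $n > L(f') \ge d\abs{a_d} \ge d$, so whenever we append $x^n$ we obtain a monic polynomial of degree exactly $n$. When $x^2 \nmid f$ we simply take $g = f + x^n$, which is square-free by the key fact and satisfies $L(f-g) = L(-x^n) = 1$. When $x^2 \mid f$ we instead apply the key fact to $\tilde f(x) = f(x) + x$: here $\tilde f(0) = f(0) = 0$ but $\tilde f'(0) = f'(0) + 1 = 1$, so $x^2 \nmid \tilde f$, and $g = \tilde f + x^n = f + x + x^n$ is square-free of degree $n$ with $L(f-g) = 2$.

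To prove the key fact, suppose for contradiction that $g = f + x^n$ has a multiple root $\alpha \in \C$, so that $g(\alpha) = 0$ and $g'(\alpha) = 0$; that is,
\[
\alpha^n = -f(\alpha), \qquad n\alpha^{n-1} = -f'(\alpha).
\]
If $\alpha = 0$ the second relation gives $f'(0) = 0$ and the first gives $f(0) = 0$, i.e. $x^2 \mid f$, which is excluded. So $\alpha \ne 0$. I would first rule out $\abs{\alpha} \ge 1$: from $n\abs{\alpha}^{n-1} = \abs{f'(\alpha)} \le L(f')\abs{\alpha}^{d-1}$ (bounding each monomial of $f'$ by $\abs{\alpha}^{d-1}$ since $\abs{\alpha}\ge 1$) we obtain $n \le L(f')\abs{\alpha}^{d-n} \le L(f')$, contradicting $n > L(f')$.

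It remains to exclude nonzero multiple roots with $\abs{\alpha} < 1$, and this is the conceptual heart of the argument. The point is that $g$ is \emph{monic} over $\Z$, so every root of $g$ is an algebraic integer, while the multiple roots of $g$ are precisely the roots of $\gcd(g,g') \in \Q[x]$ and hence form a set stable under $\Gal(\overline{\Q}/\Q)$. Thus every conjugate of $\alpha$ is again a nonzero multiple root, so by the previous paragraph each conjugate has absolute value strictly below $1$. But then the norm of $\alpha$ is a nonzero rational integer whose absolute value equals the product of the moduli of all conjugates, forcing it to be $<1$ — impossible. Hence $g$ has no multiple root and is square-free.

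Finally, I would address the single value of $n$ not covered by the reduction in the $x^2 \mid f$ case: applying the key fact to $\tilde f = f + x$ requires $n > L(\tilde f') = L(f') + 1$, whereas the hypothesis only gives $n \ge L(f') + 1$, leaving out $n = L(f') + 1$. This borderline value, rather than the main estimate, is the real obstacle. Rerunning the length bound for $g = f + x + x^n$ still excludes every nonzero multiple root with $\abs{\alpha} > 1$, so the only gap is $\abs{\alpha} = 1$; here equality in the bound forces all conjugates of $\alpha$ onto the unit circle, so $\alpha$ is a root of unity by Kronecker's theorem. A short direct computation then finishes: the forced alignment of the coefficients of $\tilde f'$ gives $\alpha^{\,n-1} = -1$, whence $\alpha^n = -\alpha$ and $g(\alpha) = f(\alpha) = 0$, and this together with the reality constraints on the aligned terms is incompatible with $\alpha$ being a multiple root — if necessary after replacing the perturbation $x$ by $-x$.
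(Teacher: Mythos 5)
Your ``key fact'' and its proof are exactly the paper's argument: the length bound $\abs{f'(\alpha)}\le L(f')\abs{\alpha}^{d-1}$ kills multiple roots of modulus at least $1$, and the norm/Galois observation (conjugates of a multiple root of the monic polynomial $g$ are again multiple roots, and a nonzero algebraic integer has a conjugate of modulus at least $1$) reduces the case $\abs{\alpha}<1$ to the previous one; the paper states the replacement of $\alpha$ by such a conjugate more tersely, but it is the same step. The divergence is in the case $x^2\mid f$, and it matters. The paper perturbs by $1$, not by $x$: since $x^2\mid f$ gives $f(0)=0$, the polynomial $f+1$ satisfies $x^2\nmid(f+1)$ and, crucially, $(f+1)'=f'$, so $L((f+1)')=L(f')$ and the key fact applies verbatim under the stated hypothesis $n>L(f')$. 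That makes the second case a one-line reduction. Your choice $\tilde f=f+x$ changes the derivative; because $x^2\mid f$ forces the constant term of $f'$ to vanish, $L(\tilde f')=L(f')+1$, and you are left with the borderline value $n=L(f')+1$ to handle by hand.

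Your treatment of that borderline value is the one place where the write-up is not a proof. The reduction to $\abs{\alpha}=1$, and the equality analysis in the triangle inequality (all nonzero terms $c_j\alpha^j$ of $\tilde f'(\alpha)=\sum_j c_j\alpha^j$ must be positive reals because $c_0=1$ is one of them; hence $\tilde f'(\alpha)=n$, $\alpha^{n-1}=-1$, and then $g(\alpha)=f(\alpha)=0$) are correct — and Kronecker's theorem is not actually needed for any of it. But your concluding sentence, that this is ``incompatible with $\alpha$ being a multiple root, if necessary after replacing the perturbation $x$ by $-x$,'' is an assertion, not an argument, and the hedge suggests it was never checked. It does close, with no sign change required: writing $f(x)=\sum_{k\ge 2}a_kx^k$ and $c_j=(j+1)a_{j+1}$ for $j\ge 1$, one has
$$
f(\alpha)=\sum_{k\ge 2}\frac{c_{k-1}}{k}\,\alpha^{k}
=\alpha\sum_{j\ge 1}\frac{c_j\alpha^{j}}{j+1},
$$
and by the alignment every nonzero $c_j\alpha^j$ in this sum is a positive real, with $c_{d-1}\alpha^{d-1}>0$ because $c_{d-1}=da_d\ne 0$; so the sum is strictly positive and $f(\alpha)\ne 0$, contradicting $f(\alpha)=0$. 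So your route can be completed, but only at the cost of this extra equality analysis, all of which evaporates if you perturb by $1$ instead of $x$ as the paper does.
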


Note that for $f(x)=a_dx^d+a_{d-1}x^{d-1}+\dots+a_0$ one has 
\begin{align*}
L(f') & =d|a_d|+(d-1)|a_{d-1}|+\dots+|a_1| \\
& \leq \min \{dL(f), d(d+1)H(f)/2 \}, 
\end{align*}
so \eqref{klop} can be replaced by $n>dL(f)$ or $n>d(d+1)H(f)/2$.

Roughly speaking, the result in Theorem \ref{thm:general} confirms the existence of square-free polynomials $g$ close to $f$ with $\deg g$
arbitrary large. 
In the following theorem, we establish the existence of one square-free polynomial close to $f$ but of degree that for large $L(f)$ can be much smaller than the bound in \eqref{klop}. (In terms of $L(f)$, the bound $d L(f)$ on $\deg g$ is replaced by
the bound $2.2 d (\log d/\log \log d)^3 \log L(f)$.)

\begin{theorem}   \label{thm:special}
For any polynomial $f \in \Z[x]$ of degree $d \geq 3$, 
 there is a square-free polynomial $g\in \Z[x]$ satisfying 
 \begin{equation}\label{baab}
\deg g < 
\begin{cases}
2.2d \big(\log d/\log\log d\big)^3\log L(f) & \textrm{if $x^2 \nmid f(x)$,}\\
2.2d \big(\log d/\log\log d\big)^3\log (L(f)+1)  & \textrm{always,}
\end{cases}
\end{equation} 
and 
\begin{equation*}
L(f-g) = \left\{
\begin{array}{rl}
1 & \textrm{if $x^2 \nmid f(x)$,}\\
2 & \textrm{always.}
\end{array} \right.
\end{equation*}
\end{theorem}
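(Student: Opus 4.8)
The plan is to realise $g$ as $f\pm x^{n}$ for a suitably chosen exponent $n$, so that automatically $L(f-g)=1$, and to control the smallest usable $n$ by means of a height estimate. I would first reduce the ``always'' statement to the case $x^{2}\nmid f$: if $x^{2}\mid f$, I replace $f$ by $\tilde f(x)=f(x)+x$, which satisfies $x^{2}\nmid\tilde f$ and $L(\tilde f)=L(f)+1$; a square-free polynomial of the form $\tilde f+x^{n}$ then equals $f+x+x^{n}$, so that $L(f-g)=2$, while the degree bound applied to $\tilde f$ produces exactly the $\log(L(f)+1)$ appearing in \eqref{baab}. Hence from now on I assume $x^{2}\nmid f$ and search for a square-free $f+x^{n}$ with $n>d$; such a polynomial is monic, hence primitive, so being square-free over $\Z$ is equivalent to having no repeated root in $\overline{\Q}$.

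Next I would pin down the repeated roots. Suppose $f+x^{n}$ has a repeated root $\alpha\in\overline{\Q}$. Since $x^{2}\nmid f$ we have $\alpha\neq0$, and $\alpha$ is not a root of $f$ (otherwise $\alpha^{n}=-f(\alpha)=0$), so $f(\alpha)\neq0$. From $f(\alpha)+\alpha^{n}=0$ and $f'(\alpha)+n\alpha^{n-1}=0$ I eliminate $\alpha^{n}$ to get $\alpha f'(\alpha)=nf(\alpha)$; thus $\alpha$ is a root of the nonzero integer polynomial $xf'(x)-nf(x)$ of degree at most $d$, whence $\deg\alpha\le d$. The same relations retain the key identity $\alpha^{n}=-f(\alpha)$, which I now read at the level of heights.

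Applying the absolute logarithmic Weil height $\h$ to $\alpha^{n}=-f(\alpha)$, and using $\h(\alpha^{n})=n\,\h(\alpha)$ together with the standard estimate $\h(f(\alpha))\le d\,\h(\alpha)+\log L(f)+\log(d+1)$ (obtained place by place from $|f(\alpha)|_{v}\le (d+1)\max_i|a_i|_v\,\max(1,|\alpha|_{v})^{d}$), I would obtain
\[
(n-d)\,\h(\alpha)\le \log L(f)+\log(d+1).
\]
If $\alpha$ is not a root of unity, then $\h(\alpha)>0$, and a Dobrowolski-type lower bound $\h(\alpha)\ge \tfrac{c}{\deg\alpha}\big(\log\log\deg\alpha/\log\deg\alpha\big)^{3}\ge \tfrac{c}{d}\big(\log\log d/\log d\big)^{3}$ (with the degree-one values $\alpha\neq0,\pm1$ covered by $\h(\alpha)\ge\log 2$) forces
\[
n\le d+\tfrac{1}{c}\,d\big(\log d/\log\log d\big)^{3}\big(\log L(f)+\log(d+1)\big)=:T.
\]
Consequently every exponent $n>T$ for which $f+x^{n}$ fails to be square-free must have a \emph{root of unity} as a repeated root.

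It then remains to treat roots of unity and to conclude. If a root of unity $\zeta$ of order $m$ (so $\varphi(m)=\deg\zeta\le d$) is a repeated root of $f+x^{n}$, then $|\zeta|_{v}=1$ at every archimedean place, so $n=|f'(\zeta)|$ is determined by $\zeta$ while $\zeta^{n}=-f(\zeta)$ fixes $n$ modulo $m$; hence each such $\zeta$ accounts for at most one bad exponent. As the number of roots of unity of degree at most $d$ is bounded by an explicit quantity $B(d)$ independent of $L(f)$, the set of bad exponents exceeding $T$ has at most $B(d)$ elements, so the least integer $n>T$ avoiding this finite set gives a square-free $f+x^{n}$. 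The argument succeeds as soon as $T+B(d)<2.2\,d\big(\log d/\log\log d\big)^{3}\log L(f)$, and verifying this last inequality is the part I expect to demand the most care: one must fix an explicit Dobrowolski constant $c$ with $1/c$ comfortably below $2.2$, and absorb the root-of-unity count $B(d)$ together with the additive terms $d$ and $\log(d+1)$ into the remaining slack. This constant-chasing, rather than any conceptual difficulty, is where the precise value $2.2$ and the hypothesis $d\ge3$ (needed already for $\log\log d>0$) enter.
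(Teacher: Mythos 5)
Your skeleton is essentially the paper's: take $g=f\pm x^{n}$ (shifting by $+1$ or $+x$ first when $x^{2}\mid f$), observe that a repeated root $\alpha$ of $f+x^{n}$ satisfies $\alpha^{n}=-f(\alpha)$ and annihilates $xf'(x)-nf(x)$, hence $\deg\alpha\le d$, then bound the bad exponents coming from non-roots-of-unity by a Lehmer-type lower bound and count the root-of-unity exponents separately (your observation that each root of unity determines at most one bad exponent is correct, and parallels the paper's argument that the square of a cyclotomic polynomial can divide at most one $x^{n}+f$). But the quantitative core has a genuine gap, and it is not mere constant-chasing: the explicit Dobrowolski constant you ask for does not exist in the literature. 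The best known explicit bound, Voutier's (which is what the paper cites), gives $\log M(\alpha)>\tfrac14\bigl(\log\log\deg\alpha/\log\deg\alpha\bigr)^{3}$, i.e. $c=1/4$ and $1/c=4>2.2$ in your notation. The paper reaches an effective constant $2$ (leaving $0.2$ of slack) by a device absent from your plan: it applies the relation $\alpha^{n}=-f(\alpha)$ not to the Weil height but to the largest-modulus conjugate, getting $(n-d)\log|\alpha|\le\log L(f)$, and then splits into cases. If $\alpha$ is reciprocal, at most half of its conjugates lie outside the unit circle, so $M(\alpha)\le|\alpha|^{d/2}$ and Voutier yields $\log|\alpha|>\tfrac{1}{2d}\bigl(\log\log d/\log d\bigr)^{3}$, a factor $2$ rather than $4$; if $\alpha$ is nonreciprocal, Smyth's theorem $M(\alpha)\ge 1.3247\ldots$ gives far more. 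Averaging over all places, as the Weil height does, irretrievably loses this factor of two, so your route cannot reach $2.2$.

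Two further steps fail as written, though both are repairable. First, your height estimate carries an extra $\log(d+1)$; since the target bound has no additive $\log d$ term, for fixed $L(f)=3$ and $d\to\infty$ the quantity $\tfrac1c d(\log d/\log\log d)^{3}\log(d+1)$ alone exceeds $2.2\,d(\log d/\log\log d)^{3}\log 3$, so the theorem as stated would not follow. The fix: for integer coefficients one has $|f(\alpha)|_{v}\le\max(1,|\alpha|_{v})^{d}$ at finite places and $|f(\alpha)|_{v}\le L(f)\max(1,|\alpha|_{v})^{d}$ at infinite ones, whence $\mathrm{h}(f(\alpha))\le d\,\mathrm{h}(\alpha)+\log L(f)$ with no $(d+1)$ — this is exactly the estimate $|f(\alpha)|\le|\alpha|^{d}L(f)$ the paper uses at the single archimedean place. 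Second, your count $B(d)$ is per root of unity; the number of roots of unity of degree at most $d$ is at least $\sum_{p\le d+1}(p-1)\gg d^{2}/\log d$, which again swamps the available slack $0.2\,d(\log d/\log\log d)^{3}\log L(f)$ when $L(f)$ is bounded. You must count per order (conjugate roots of unity give the same bad exponent, since the square of the corresponding cyclotomic polynomial then divides $f+x^{n}$), i.e. by $\Phi(d)=\#\{k:\varphi(k)\le d\}$, and you then need an explicit linear bound for $\Phi(d)$ — precisely the content of the paper's Lemma~\ref{lem:Phi} ($\Phi(r)\le 2.5r$ for $r\le 10^{6}$, $\Phi(r)\le 23r$ always), proved by computation together with Rosser--Schoenfeld. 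Finally, $L(f)=2$ must be treated separately, since the slack inequality requires $\log L(f)\ge\log 3$; the paper does this by exhibiting $g=\pm x^{d}\pm 2x$ or $\pm x^{d}\pm 2$ directly.
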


The proof of Theorem~\ref{thm:special} is given in Section \ref{sec:special}. Then,
in Section~\ref{sec:binary} we confirm Conjecture \ref{conj:Turan} for several classes of integer polynomials by transfering our problem to binary polynomials,
that is, considering $f$ modulo $2$, 
and then using computational strategies.  
Finally, in Section~\ref{sec:prime} we consider polynomials over prime fields $\F_p$ and their  distances to square-free polynomials.

\section{Proof of Theorem~\ref{thm:L1}}
\label{sec:L1}

Observe that for any polynomial $f \in \Z[x]$ of the form $x^2h(x)$ with non-zero $h(x) \in \Z[x]$ (so that $f$ automatically is not square-free), 
if there were a square-free polynomial $g \in \Z[x]$ satisfying $L(f-g) \le 1$, 
then $g$ must be of the form $f(x) \pm 1$ or $f(x) \pm x$. 
So, our purpose is to find polynomials $f\in \Z[x]$ of the form $x^2h(x)$ such that none of the following four polynomials 
$$
f(x)+1, \quad f(x)-1, \quad  f(x)+x, \quad  f(x)-x
$$
 is square-free.   

Assume that 
\begin{equation}   \label{eq:congruence}
\begin{split}
& f \equiv 0 ~({\rm mod}~x^2), \quad  f \equiv  -1 ~({\rm mod}~(2x+1)^2), \\&  f \equiv  x ~({\rm mod}~(2x-1)^2), \quad 
 f \equiv  1 ~({\rm mod}~(6x+1)^2), \\&  f \equiv  -x ~({\rm mod}~(6x-1)^2).
\end{split}
\end{equation} 
Then, all the solutions in $\Z[x]$ of \eqref{eq:congruence} meet our purpose. 
By the Chinese Remainder Theorem and using PARI/GP \cite{PARI}, 
we obtain a solution $f_0 \in \Q[x]$ of \eqref{eq:congruence}: 
\begin{align*} 
f_0(x) = & 106515x^9 - 8991x^8 - \frac{236133}{4}x^7 + \frac{20385}{4}x^6 \\
& + \frac{152209}{16}x^5 - \frac{13701}{16}x^4 - \frac{22207}{64}x^3 + \frac{2243}{64}x^2. 
\end{align*}
Let $h(x)$ be the product of all five polynomials that appear in the moduli of \eqref{eq:congruence}. 
Then, 
\begin{align*}
h(x) = 20736x^{10} - 11520x^8 + 1888x^6 - 80x^4 + x^2.
\end{align*} 
So, the general solution of \eqref{eq:congruence} in $\Q[x]$ has the form 
$$
f = f_0 + hf_1, \quad f_1 \in \Q[x]. 
$$
Now, we want to choose suitable $f_1$ such that $f \in \Z[x]$.  

Notice that $f_0$ has six coefficients not in $\Z$. 
We then choose $f_1$ to be a polynomial in $\Q[x]$ of degree $5$: 
$$
f_1(x) = a_5x^5 + \cdots + a_1x + a_0 
$$
such that $f_0 + hf_1 \in \Z[x]$, that is, $hf_1$ is congruent to $-f_0$ modulo the integers. 
By comparing the coefficients modulo the integers starting from the lowest term, we obtain 
\begin{equation*}
\begin{split}
& a_0 \in \frac{61}{64} + \Z, \quad a_1 \in \frac{63}{64} + \Z,  \quad  a_2 \in \frac{9}{16} + \Z,  \\
& a_3 \in \frac{11}{16} + \Z,   \quad  a_4 \in \frac{1}{4} + \Z, \quad  a_5 \in \frac{3}{4} + \Z.
\end{split}
\end{equation*} 
This completes the proof of the theorem for $d=15$. 

In particular, choosing $a_0 = \frac{61}{64}$, $a_1 = \frac{63}{64}$, $a_2 = \frac{9}{16}$, 
$a_3 =\frac{11}{16}$, $a_4 = \frac{1}{4}$ and $a_5 = \frac{3}{4}$, we get the polynomial presented in \eqref{eq:example}. 

For $d\ge 16$, we first choose any polynomial $f(x)$ of degree $15$ as above (for instance, the polynomial in \eqref{eq:example}), 
and then consider the polynomial
$$
 f(x) + k (2x+1)^{2} (2x-1)^{2} (6x+1)^{2} (6x-1)^{2} x^{d-8}, 
$$
where $k$ is any non-zero integer. 
By the construction of $f(x)$,  we in fact complete the proof for $d \ge 16$.

\begin{remark}
The anonymous referee suggested the following approach to prove Theorem~\ref{thm:L1}. 
Starting with the polynomial list $[x^2]$, one may search for a squared polynomial of small degree 
which has resultant 1 with each polynomial in this list until one obtains a list of five polynomials. 
For example, here is a possible list: 
$$
[x^2, (x -1)^2 , (2x -1)^2 ,  (x^2 + x - 1)^2 ,  (x^3 - x^2 - 2x + 1)^2].
$$
Then, due to the resultants being 1, by solving the congruence equations (for instance, using PARI/GP \cite{PARI})
\begin{equation*}   
\begin{split}
& f \equiv 0 ~({\rm mod}~x^2), \quad  f \equiv  1 ~({\rm mod}~(x-1)^2), \quad  f \equiv  -1 ~({\rm mod}~(2x-1)^2), \\ 
& f \equiv  x ~({\rm mod}~(x^2+x-1)^2), \quad  f \equiv  -x ~({\rm mod}~(x^3-x^2-2x+1)^2), 
\end{split}
\end{equation*} 
one gets the following solution in $\Z[x]$ of degree 15:  
\begin{align*}
f(x)  = & 125200x^{15} - 325540x^{14} - 726388x^{13} + 2529575x^{12} + 552645x^{11} \\
& - 6814352x^{10} + 3701398x^9 + 6619994x^8 - 7934278x^7 \\&  + 313994x^6 
+ 3958516x^5 - 2649357x^4 + 723237x^3 - 74643x^2.  
\end{align*}
Then, Theorem~\ref{thm:L1} for $d \ge 16$ holds by taking 
$$
f(x)+k(x-1)^2 (2x-1)^2 (x^2 +x-1)^2 (x^3 -x^2 -2x+1)^2 x^{d-14},
$$
where $k$ is any non-zero integer.

More generally, one can use the list of polynomials
\begin{align*}
[x^2, (kx-1)^2 ,(2kx-1)^2 ,  (k^2x^2 +kx-1)^2 , (k^3x^3 -k^2x^2 -2kx+1)^2] 
\end{align*}
for any non-zero integer $k$, without affecting the fact that the resulting polynomial of degree 15 is in $\Z[x]$. 
To see that this indeed gives an inifinite list of polynomials $f(x)$ of degree 15 
with the desired property, it suffices to notice that  for any resulting polynomial $f$, the polynomial
$f(x)-1$ cannot be divisible by infinitely many polynomials of the form $(kx-1)^2$.   
\end{remark}

\section{Proof of Theorem \ref{thm:general}}
\label{sec:general}

We first assume that $x^2 \nmid f(x)$, where
$f(x)=a_dx^d+\dots+a_1x+a_0$. 
Consider
the polynomial $x^n+f(x)$ with any integer
$n>L(f')$. Here, $L(f')=d|a_d|+(d-1)|a_{d-1}|+\dots+|a_1| \geq d$.

Suppose $x^n+f(x)$ has a square factor $h(x)^2$, where $h$ is an irreducible monic polynomial. Let $\alpha$ be any root of this square factor. 
Clearly, $\alpha$ is a non-zero algebraic integer, because $h(x) \ne x$. In case $|\alpha|<1$ we replace $\alpha$ by its 
conjugate satisfying $|\alpha| \geq 1$. 
Since this (new) $\alpha$ of modulus at least 1 is a root of $nx^{n-1}+f^\prime (x)=0$, applying the inequa\-li\-ty 
$|f^\prime (\alpha)| \leq |\alpha|^{d-1}L(f')$  we obtain
$
n|\alpha|^{n-1} = |f^\prime (\alpha)| \leq |\alpha|^{d-1}L(f').
$
 Consequently, 
 $$
n \leq n|\alpha|^{n-d} \leq L(f'),
$$
contrary to the assumption \eqref{klop} on $n$. 
Hence, the polynomial $x^n+f(x)$ is square-free, which implies the required result.

Next, assume that $x^2 \mid f(x)$. 
Then, $f(0)=0$ and
applying the same argument as the above, we deduce that the polynomial $x^n + f(x) + 1$ is square-free. 
This completes the proof of the theorem.

\section{Proof of Theorem \ref{thm:special}}
\label{sec:special}

We first make some preparations. 
For any real number $r \ge 1$, let $\Phi(r)$ be the number of positive integers $n$ for which $\varphi(n) \le r$, where $\varphi$ is 
Euler's totient function.  
 Erd\H{o}s \cite{Erdos} has shown that 
 $$
 \Phi(r) \sim (\zeta(2)\zeta(3)/\zeta(6))r
 $$ 
 as $r \to \infty$, where $\zeta(s)$ is the Riemann zeta function and 
$$
\zeta(2)\zeta(3)/\zeta(6) = 1.943596\ldots.
$$
Based on Bateman's work \cite{Bateman}, Derbal \cite{Der} has given an explicit version: for $r \ge 240$, one has
\begin{equation}  \label{eq:Der}
\begin{split}
|\Phi(r) &-  (\zeta(2)\zeta(3)/\zeta(6))r |\\ 
& < 58.61r \exp \Big(-(\sqrt{2}/8)\sqrt{(\log r)(\log\log r)} \Big), 
\end{split}
\end{equation}
where $\log$ stands for the natural logarithm.  
Here, we present a simple and explicit upper bound for $\Phi(r)$.

\begin{lemma}  \label{lem:Phi}
For any real number $r \ge 1$, we have 
\begin{equation*}
\Phi(r) \le \left\{
\begin{array}{rl}
2.5r & \textrm{if $r \le 10^6$,}\\
23r & \textrm{always.}
\end{array} \right.
\end{equation*}
\end{lemma}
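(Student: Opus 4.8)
The plan is to split the range of $r$ at $10^6$ and treat the two pieces by completely different means: a finite computation for $r\le 10^6$, and the explicit analytic estimate \eqref{eq:Der} of Derbal for $r>10^6$. Observe first that the second (``always'') bound follows from the first one together with the large-$r$ estimate: for $r\le 10^6$ we have $\Phi(r)\le 2.5r\le 23r$, so only the range $r>10^6$ needs a separate argument to produce the constant $23$.

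For the range $r\le 10^6$ I would argue as follows. The function $\Phi$ is non-decreasing and piecewise constant, and it can only jump at real numbers $r$ that are values of Euler's function, that is, at $r=\varphi(n)$ for some $n$. Consequently, on each interval between two consecutive totient values the ratio $\Phi(r)/r$ is maximised at the left endpoint, so it suffices to verify the inequality $\Phi(r)\le 2.5r$ at the (finitely many) totient values $r=\varphi(n)\le 10^6$. To make this a genuinely finite check I would first bound the range of $n$ that can occur: using a standard effective lower bound of the shape $\varphi(n)\ge n/(e^{\gamma}\log\log n+3/\log\log n)$ for $n\ge 3$, the condition $\varphi(n)\le 10^6$ forces $n\le N_0$ for an explicit $N_0$ of size about $6\times 10^6$. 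One then sieves the values $\varphi(n)$ for all $n\le N_0$, records those totient values not exceeding $10^6$, and for each such value $v$ computes $\Phi(v)=\#\{n:\varphi(n)\le v\}$ and checks $\Phi(v)\le 2.5v$. The constant $2.5$ is visibly sharp already at $r=2$, where the five integers $1,2,3,4,6$ give $\Phi(2)=5=2.5\cdot 2$; this is why no smaller constant can work and why the bound must be verified directly rather than read off from the asymptotic constant $\zeta(2)\zeta(3)/\zeta(6)=1.9436\dots$.

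For $r>10^6$ I would invoke \eqref{eq:Der} directly. Writing $A=\zeta(2)\zeta(3)/\zeta(6)<1.944$ and $E(r)=\exp(-(\sqrt2/8)\sqrt{(\log r)(\log\log r)})$, the estimate \eqref{eq:Der} (applicable since $r>10^6>240$) gives $\Phi(r)/r<A+58.61\,E(r)$. Since $(\log r)(\log\log r)$ is increasing for $r>e^{e}$, the factor $E(r)$ is decreasing, so for $r\ge 10^6$ one has $E(r)\le E(10^6)$. A direct evaluation gives $E(10^6)<0.345$, whence $\Phi(r)/r<1.944+58.61\cdot 0.345<22.2<23$ for all $r\ge 10^6$. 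This yields the ``always'' bound on the remaining range and completes the proof.

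The main obstacle is the finite verification for $r\le 10^6$: one must justify rigorously both that the enumeration is complete---that the cutoff $N_0$ really captures every $n$ with $\varphi(n)\le 10^6$---and that testing the inequality only at totient values is legitimate. The analytic range is comparatively routine, the one point requiring care being that Derbal's error constant $58.61$ is so large that the crossover into $\Phi(r)/r<23$ occurs only around $r\approx 4\times 10^5$, so the split point $10^6$ must be taken safely beyond it.
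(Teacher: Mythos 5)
Your proof is correct and follows essentially the same strategy as the paper's: a finite computer verification for $r \le 10^6$, made rigorous by a Rosser--Schoenfeld-type effective lower bound for $\varphi$ that bounds the search range, combined with Derbal's explicit estimate \eqref{eq:Der} for $r > 10^6$. The only cosmetic difference is that you reduce the finite check to totient values of $r$, while the paper reduces it to integer (indeed, even integer) values of $r$; both reductions are valid and lead to the same computation.
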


\begin{proof}
For $r > 10^6$,
by a direction computation,  from \eqref{eq:Der} we
derive that $\Phi(r)$ is less than
$$
(\zeta(2)\zeta(3)/\zeta(6))r  + 58.61r \exp \Big(-(\sqrt{2}/8)\sqrt{(\log r)(\log\log r)} \Big) < 23r. 
$$ 

To prove the bound $\Phi(r) \leq 2.5 r$ for $1 \le r \le 10^6$,
it suffices to establish this inequality for every integer $r$ between 1 and $10^6$. 
 For $r=1,2,3$ one has $\Phi(1)=2$ and 
$\Phi(2)=\Phi(3)=5$, so in the interval $r \in [1,4)$ the inequality
$\Phi(r) \leq 2.5r$ is true with equality for $r=2$. Now suppose that $r$ is an integer at least 4. 
Notice that $\Phi(r)=\Phi(r+1)$ when $r$ is even (because $\varphi(n)$ is even for any integer $n \ge 2$). 
So, we only need to establish the inequality for even integers $r$ at least 4. 
We make some computations to achieve this purpose.
In all what follows, we explain the algorithm. 

Note that, by \cite[Theorem 15]{Rosser}, for any positive integer $n\ge 3$ we have 
\begin{equation}   \label{eq:varphi1}
\varphi(n) > \frac{n}{1.782\log\log n + 2.507/\log\log n}. 
\end{equation}
Since   for any integer $n \ge 30000$ the inequality
$$
2.507/\log\log n < 0.461\log\log n,
$$
holds,
applying \eqref{eq:varphi1}, one gets the inequality
\begin{equation}   \label{eq:varphi2}
\varphi(n) > \frac{n}{2.243\log\log n} > n^{5/6}
\end{equation} 
for $n \ge 30000$.
Consequently, 
for any integer $n \ge 30000$, if $\varphi(n) \le r$, by \eqref{eq:varphi2},
we obtain
\begin{equation}    \label{eq:nr}
n < 2.243r\log\log n < 2.243r \log\log(r^{6/5}). 
\end{equation}

Now, given an even integer $r$ between 4 and $10^6$, we need to count positive integers $n$ satisfying $\varphi(n) \le r$.
By \eqref{eq:nr}, we only need to consider positive integers $n$ satisfying
$$
n \le \max\{30000, 2.243r \log\log(r^{6/5})\}.
$$ 
To speed up the computations, one can also use the fact that 
 $\varphi(2n)=\varphi(n)$ when $n$ is odd. 
By a direction computation (for instance, using PARI/GP \cite{PARI}), 
we have checked that $\Phi(r) \le 2.5 r$ for any integer $4 \le r \le 10^6$. 
\end{proof}

From Lemma \ref{lem:Phi}, it is natural  to conjecture that $\Phi(r) \le 2.5 r$ for any real $r \ge 1$. 
Moreover, there are only 37 positive integers $r \le 10^6$  for which the quotient $\Phi(r)/r$ is at least 2. 
We list them as follows: 
\begin{align*}
& 2, 4, 6, 8, 9, 10, 12, 13, 16, 18, 20, 24, 25, 26, 32, 36, 40, 42, 44, 48, 49, 50, \\
&72, 73, 74, 80, 84, 96, 97, 120, 121, 144, 145, 240, 241, 242, 288.
\end{align*}
The upper bound $2.5r$ is attained only when $r=2$. 
Besides, the upper bound $23r$ in Lemma \ref{lem:Phi} can be improved by more advanced computations. 
However, it is widely believed that computing the values of Euler's totient function is as hard as factoring positive integers. 
(What is already proved in Lemma~\ref{lem:Phi} is sufficient for our purposes and produces the same constant in Theorem \ref{thm:special}
as that with the optimal bound $\Phi(r) \leq 2.5 r$ for each $r \geq 1$.)

Now, we are ready to prove Theorem \ref{thm:special}.

\begin{proof}[Proof of Theorem \ref{thm:special}] 

We first assume that $x^2 \nmid f(x)$. 
Suppose that $h(x)^2$ divides $x^n+f(x)$ for some integer $n \ge 1$, where $h \in \Z[x]$ is an irreducible polynomial of degree at least $1$.
 Since $h$ divides both $x^n + f(x)$ and $nx^{n-1} + f^\prime(x)$,  
we derive that $h$ divides $nx^n+nf(x) - nx^n-xf^\prime(x)=nf(x)-xf'(x)$. 
It is easy to see that  the polynomial $nf(x) - xf^\prime(x)$ is non-zero. 
Hence, we have 
\begin{equation}    \label{eq:degh}
\deg h \leq d.
\end{equation}

Now, let us consider two cases: $h$ is a cyclotomic polynomial (Case C), and $h$ is not 
a cyclotomic polynomial (Case N). 

Case C.
We claim that $h^2$ divides at most one polynomial $x^n+f(x)$, where
$n \ge 1$ is an integer.  
Indeed, if it divides two such polynomials, say $x^m+f(x)$ and $x^n+f(x)$ (where $m>n$), then
$h^2$ also divides $x^{m-n}-1$, 
which contradicts with the fact that 
the polynomial $x^{m-n}-1$ only has simple roots. 
Now, since $h$ is a cycloto\-mic polynomial of degree at most $d$ (see \eqref{eq:degh}), 
the number of possibilities for $h$ does not exceed $\Phi(d)$, which is 
the number of positive integers $k$ with the property $\varphi(k) \le d$. 

Case N.
As above, assume that $h(x)^2$ divides $x^n+f(x)$ for some integer $n>d$. 
Then, $h$ is monic and $h(x) \ne x$.  
Suppose that $\alpha$ is the largest in modulus root of $h$. 
Note that $\alpha$ is an algebraic integer. 
 Since $h$ is not a cyclotomic polynomial, by Kronecker's theorem, $|\alpha|$ is strictly greater than 1. 
 Hence, from $\alpha^n=-f(\alpha)$, we deduce that
 $|\alpha|^{n} = |f(\alpha)| \leq |\alpha|^{d} L(f)$. 
 Consequently, 
 \begin{equation}\label{nji12}
 (n-d) \log |\alpha| \le \log L(f). 
 \end{equation}

Note that if $\alpha$ is a reciprocal algebraic integer,  we have 
$$
|\alpha|^{d/2} \ge M(\alpha),
$$
 where $M(\alpha)$ is the Mahler measure of $\alpha$. 
Then, using the lower bound for the Mahler 
measure
$$
\log M(\alpha) >  \frac{1}{4} \Big( \frac{\log\log d}{\log d} \Big)^{3}
$$ 
(see \cite[Theorem]{vout}, or \cite[Theorem 1]{Dob} for an earlier result)  
and \eqref{nji12}, we further obtain 
$$
\frac{n-d}{2d} \Big( \frac{\log\log d}{\log d} \Big)^{3}  < \frac{2(n-d)\log M(\alpha)}{d} \leq (n-d)\log |\alpha| \leq \log L(f). 
$$
Hence,
\begin{equation}   \label{eq:CaseN}
n < d+2d\Big( \frac{\log d}{\log\log d} \Big)^{3} \log L(f). 
\end{equation}
In case when $\alpha$ is nonreciprocal, by Smyth's result \cite{smy}, we have a stronger bound 
$|\alpha|^d \geq M(\alpha) \geq \theta =1.324\dots$ on $\log |\alpha|$ in \eqref{nji12}, 
where $\theta$ is the real root of $x^3-x-1=0$,  so \eqref{eq:CaseN} also holds. 

We now combine the information above from Case C and Case N. 

Note that $L(f) \ge 2$ (due to $x^2 \nmid f$ and $\deg f \ge 3$). 
If $L(f)=2$, then $f(x)=\pm x^d \pm x$ or $f(x)=\pm x^d \pm 1$, 
and so we can choose $g(x)=\pm x^d \pm 2x$ or $g(x)=\pm x^d \pm 2$ accordingly for our purpose.    
Hence, in all what  follows,  we assume that $L(f) \ge 3$. 

Let us put 
$$
m =  \lfloor d+2d\Big( \frac{\log d}{\log\log d} \Big)^{3} \log L(f) \rfloor. 
$$
Combining Case C with Case N and using \eqref{eq:CaseN}, we 
derive that for some integer 
$$
n\in \{m+1,m+2,\ldots,m+\Phi(d), m+\Phi(d)+1\},
$$
the polynomial $g(x)=x^n+f(x)$ is square-free. 
It remains to bound the degree of $g$, that is, $n$. 
Clearly, 
\begin{equation*}
\begin{split}
n & \le m+\Phi(d)+1\\
& \le d+2d\Big( \frac{\log d}{\log\log d} \Big)^{3} \log L(f) +\Phi(d)+1. 
\end{split}
\end{equation*}

Therefore, in order to get the desired upper bound \eqref{baab}, it 
suffices to establish the following inequality: 
\begin{equation}\label{njinji}
d+ \Phi(d)+1 < 0.2d\Big( \frac{\log d}{\log\log d} \Big)^{3} \log L(f). 
\end{equation}

For  $3 \le d \le 10^6$, by Lemma \ref{lem:Phi}, one has $d+\Phi(d) \le 3.5d$. 
Besides, we have the inequality $(\log d/\log\log d)^{3} > 20$ for any $d \ge 3$. 
Hence,
$$
0.2d\Big( \frac{\log d}{\log\log d} \Big)^{3} \log L(f) > 0.2d \cdot 20\log 3 > 4.3d > d+ \Phi(d)+1,
$$
which yields \eqref{njinji} for  $3 \le d \le 10^6$.

Next, for $d > 10^6$ the inequality $(\log d/\log\log d)^{3} \log L(f) > 160$ is true by noticing $L(f) \ge 3$.  
Furthermore, by Lemma \ref{lem:Phi}, we have $d+\Phi(d) \le 24d$. Combining these inequalities, we deduce that
$$
0.2d\Big( \frac{\log d}{\log\log d} \Big)^{3} \log L(f) > 32d > d+ \Phi(d)+1. 
$$
This proves \eqref{njinji} for every integer $d > 10^6$, and so completes the proof of the case when $x^2 \nmid f(x)$. 

Finally, assume that $x^2 \mid f(x)$. 
Then, the desired result follows by applying the same argument as above to the polynomial $f(x)+1$.  
\end{proof}

\section{Approaches via binary polynomials}
\label{sec:binary}

In this section, we obtain some partial results towards Conjecture \ref{conj:Turan} by transfering our problem to the setting of binary polynomials. 
This is based on the simple fact that for any integer polynomial with odd leading coefficient, if its reduction modulo 2 is square-free, then the polynomial itself is also square-free. 

Let $\F_2$ denote the binary field. 
For any polynomial $f \in \F_2[x]$, we define its \textit{length} $L_2(f)$ to be the number of its monomials. 
For a polynomial $f(x) =\sum_{i=0}^{d} a_i x^i \in \F_2[x]$, where $a_i \in \{0,1\}$, of degree $d \ge 2$, we define 
$$
f_e(x) = \sum_{i=0}^{\lfloor d/2 \rfloor} a_{2i} x^i,  \qquad f_o(x) = \sum_{i=0}^{\lfloor (d-1)/2 \rfloor} a_{2i+1} x^i. 
$$
Clearly, we have $f(x)= f_e(x)^2 + xf_o(x)^2$ and the derivative
satisfies $f^\prime(x) = f_o(x)^2$.

We first present a simple but useful lemma. 

\begin{lemma}   \label{lem:fefo}
For any polynomial $f \in \F_2[x]$ of degree at least $2$, $f$ is square-free if and only if $\gcd(f_e,f_o)=1$. 
Moreover, any multiple root of $f$ is a root of the polynomial $\gcd(f_e,f_o)$.
\end{lemma}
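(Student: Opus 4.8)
The plan is to exploit the two identities supplied just before the statement, namely $f = f_e^2 + x f_o^2$ and $f' = f_o^2$, both of which hold precisely because we work in characteristic $2$. Since the equivalence in the lemma is symmetric in spirit, I would prove both implications by contraposition, as each direction then collapses to a one-line divisibility argument. Throughout I use that ``square-free'' means having no repeated irreducible factor in $\F_2[x]$.

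First I would dispatch the easy direction: if $\gcd(f_e,f_o)\neq 1$, then $f$ is not square-free. Indeed, let $p\in\F_2[x]$ be a common irreducible factor of $f_e$ and $f_o$. Then $p^2$ divides $f_e^2$ and also divides $xf_o^2$, hence $p^2\mid f_e^2+xf_o^2=f$, so $f$ has a square factor. This step uses nothing beyond the decomposition $f=f_e^2+xf_o^2$.

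For the converse I would show that if $f$ is not square-free then $\gcd(f_e,f_o)\neq 1$. Write $f=p^2h$ with $p$ irreducible. The point I expect to be the crux is the computation $f'=(p^2h)'=2pp'h+p^2h'=p^2h'$, valid because $2=0$ in $\F_2$; this gives $p^2\mid f'=f_o^2$, whence $p\mid f_o$. Feeding $p\mid f_o$ back into $f=f_e^2+xf_o^2$ together with $p\mid f$ forces $p\mid f_e^2$, so $p\mid f_e$, and therefore $p\mid\gcd(f_e,f_o)$. The only real subtlety here is noticing that in characteristic $2$ the cross term $2pp'h$ vanishes, so $f'$ keeps the whole factor $p^2$ regardless of whether $p$ is separable; this is exactly what lets me avoid appealing to perfectness of $\F_2$ or to the general criterion ``$f$ is square-free iff $\gcd(f,f')=1$,'' which would otherwise be the natural but heavier route.

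Finally, for the ``moreover'' clause I would pass to roots in $\overline{\F_2}$. If $\alpha$ is a multiple root of $f$, then $f(\alpha)=f'(\alpha)=0$; since $f'=f_o^2$, we get $f_o(\alpha)^2=0$, so $f_o(\alpha)=0$, and then $f(\alpha)=f_e(\alpha)^2+\alpha f_o(\alpha)^2=f_e(\alpha)^2$ forces $f_e(\alpha)=0$. Thus the minimal polynomial of $\alpha$ over $\F_2$ divides both $f_e$ and $f_o$, hence divides $\gcd(f_e,f_o)$, so $\alpha$ is a root of $\gcd(f_e,f_o)$. I anticipate no genuine obstacle in this part, as it is the same computation as above read at the level of evaluations rather than of divisibility.
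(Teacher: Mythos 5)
Your proof is correct, and it takes a genuinely different route from the paper's, in the sense of which key fact it invokes. The paper's proof is a one-line gcd manipulation: it quotes the criterion that $f$ is square-free if and only if $\gcd(f,f')=1$, then computes $\gcd(f,f')=\gcd(f_e^2+xf_o^2,\,f_o^2)=\gcd(f_e^2,f_o^2)=\gcd(f_e,f_o)^2$, and disposes of the ``moreover'' clause with ``obtained similarly.'' You instead prove both implications by contraposition on an explicit irreducible factor $p$, never citing that criterion. What your route buys is self-containedness: the easy direction ($p\mid f_e$ and $p\mid f_o$ imply $p^2\mid f_e^2+xf_o^2=f$) needs no derivative at all, and your converse re-derives exactly the piece of the criterion that is delicate in positive characteristic --- namely that $f=p^2h$ forces $p^2\mid f'$, because the cross term $2pp'h$ vanishes when $2=0$ --- so you never rely on the direction of the criterion that requires $\F_2$ to be perfect (separability of irreducible factors). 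What the paper's route buys is brevity, at the cost of invoking that criterion as a black box (harmless here, since $\F_2$ is perfect). Your explicit treatment of the ``moreover'' clause via evaluation at a root $\alpha\in\overline{\F_2}$, concluding that the minimal polynomial of $\alpha$ divides $\gcd(f_e,f_o)$, is also correct and is more detail than the paper supplies. One cosmetic remark: in your converse you only need $p\mid f'=f_o^2$ to conclude $p\mid f_o$ (since $p$ is prime), so the full strength $p^2\mid f'$ is not required, though it is what your computation naturally delivers.
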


\begin{proof}
Note that $f$ is square-free if and only if $\gcd(f,f^\prime)=1$. 
We see that this is equivalent to $\gcd(f_e^2 + xf_o^2,f_o^2)=1$, 
that is, $\gcd(f_e^2,f_o^2)=1$.  This happens if and only if $\gcd(f_e,f_o)=1$. 
The other statement can be obtained similarly. 
\end{proof}

Based on Lemma \ref{lem:fefo}, we can use PARI/GP \cite{PARI} to test binary polynomials of low degree. 
Our calculations show the following: 

\begin{lemma}  \label{lem:36}
For each polynomial $f \in \F_2[x]$ of degree $d \le 36$ which is not square-free and
satisfies $f(0) \ne 0$, 
there exists an integer $n$ with $0 < n < d$ such that $x^n + f(x)$ is square-free.   
\end{lemma}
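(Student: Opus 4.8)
The plan is to convert the square-freeness condition into the gcd criterion of Lemma~\ref{lem:fefo} and then carry out a heavily optimized exhaustive search. First, note that since $0<n<d$, adding $x^n$ changes neither the degree $d$ nor the nonzero constant term of $f$, so every candidate $x^n+f(x)$ is again a degree-$d$ polynomial with nonzero constant term. Writing $f=f_e^2+xf_o^2$ as in the excerpt, the effect of adding $x^n$ is entirely local: if $n=2i$ is even then $x^n+f$ has even part $f_e+x^i$ and odd part $f_o$, while if $n=2j+1$ is odd then its even part is $f_e$ and its odd part is $f_o+x^j$. Hence, by Lemma~\ref{lem:fefo}, the requirement that $x^n+f$ be square-free becomes
\begin{equation*}
\gcd(f_e+x^i,\,f_o)=1 \quad (n=2i), \qquad \gcd(f_e,\,f_o+x^j)=1 \quad (n=2j+1).
\end{equation*}
Thus finding a good $n$ amounts to flipping a single coefficient of $f_e$ or $f_o$ (every such coefficient except the constant term of $f_e$, which is $n=0$, and the top coefficient fixing the degree, which is $n=d$) so as to make the two halves coprime.

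Second, I would organize the search around the pair $(f_e,f_o)$ rather than around $f$ itself. The hypothesis that $f$ is not square-free is exactly $\gcd(f_e,f_o)\ne 1$, and $f(0)\ne 0$ forces the constant term of $f_e$ to equal $1$. For each fixed degree $d\le 36$ I would enumerate all admissible pairs $(f_e,f_o)$, and for each non-square-free $f$ run through the $\lfloor(d-1)/2\rfloor$ even shifts $f_e+x^i$ (with $f_o$ held fixed) and the $\lfloor d/2\rfloor$ odd shifts $f_o+x^j$ (with $f_e$ held fixed), stopping as soon as one of the gcds equals $1$. Since a degree-$36$ polynomial over $\F_2$ fits into a single machine word, each gcd can be computed by a word-level binary (Stein) algorithm, and the fact that one half is held fixed across all shifts of a given parity lets one reuse intermediate data. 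The output of the computation is the verification that for every such $f$ at least one shift succeeds, which is precisely the assertion of the lemma.

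The main obstacle is the size of the search space. The number of degree-$d$ polynomials over $\F_2$ with nonzero constant term is $2^{d-1}$, of which (by the usual $1/2$ density of square-free polynomials over $\F_2$) about half are not square-free, so for $d=36$ one must certify roughly $2^{34}\approx 1.7\times 10^{10}$ polynomials. I expect this to be the crux, and would attack it by bit-packing with vectorized word-GCD to reduce the per-polynomial cost to a handful of operations, and by early pruning that discards a factor $p\mid\gcd(f_e,f_o)$ by tracking for which shifts $p$ still divides the modified half. It is worth recording why a purely combinatorial count does \emph{not} seem to close the argument. If one tries to bound the number of ``bad'' odd shifts $j$ by summing, over the irreducible factors $p$ of $f_e$, the count of $j$ with $x^j\equiv f_o\pmod p$, then factors for which $x$ has small multiplicative order spoil the estimate: for instance, if $x+1\mid f_e$ and $f_o(1)=1$, then $x+1$ divides $f_o+x^j$ for every $j$, so every odd shift fails and one is forced to use even shifts, which alter the factorization of $f_e$ globally. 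This interference between the two families of shifts is exactly what blocks an unconditional proof and makes the finite computation the natural route for $d\le 36$.
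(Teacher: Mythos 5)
Your proposal is correct and takes essentially the same approach as the paper: the paper proves this lemma precisely by an exhaustive computer verification (with PARI/GP) based on the $\gcd(f_e,f_o)$ criterion of Lemma~\ref{lem:fefo}. Your parity analysis of how adding $x^n$ acts on the pair $(f_e,f_o)$ and your implementation optimizations are additional detail on top of the same underlying computational argument.
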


\begin{corollary}
For each polynomial $f \in \F_2[x]$ of degree $d \le 37$ which is not square-free and satisfies $x \mid f $ and $ x^2 \nmid f$, 
there exists an integer $n$ with $1 < n < d$ such that $x^n + f(x)$ is square-free.   
\end{corollary}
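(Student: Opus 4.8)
The plan is to reduce the statement directly to Lemma~\ref{lem:36} by peeling off the factor $x$. Since $x \mid f$ but $x^2 \nmid f$, I would first write
$$
f(x) = x\,\tilde{f}(x), \qquad \tilde{f} \in \F_2[x], \quad \tilde{f}(0) \neq 0,
$$
so that $\deg \tilde{f} = d-1 \le 36$. This degree drop is exactly what allows the bound in the corollary ($d \le 37$) to exceed the bound in the lemma ($d \le 36$) by one.

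Next I would verify that $\tilde{f}$ inherits the failure of square-freeness. Because $\tilde{f}(0) \neq 0$, we have $\gcd(x,\tilde{f}) = 1$, so $x$ appears only to the first power in $f = x\,\tilde{f}$; hence $f$ is not square-free if and only if $\tilde{f}$ is not square-free. As $f$ is assumed not square-free, $\tilde{f}$ is not square-free, has degree at most $36$, and has nonzero constant term. Applying Lemma~\ref{lem:36} to $\tilde{f}$ then produces an integer $n$ with $0 < n < d-1$ such that $x^n + \tilde{f}(x)$ is square-free.

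Finally I would pass back to $f$ through the identity
$$
x^{n+1} + f(x) = x\bigl(x^n + \tilde{f}(x)\bigr).
$$
Since $n \ge 1$ and $\tilde{f}(0) \neq 0$, the constant term of $x^n + \tilde{f}(x)$ is nonzero, so $x$ does not divide $x^n + \tilde{f}(x)$; multiplying this square-free polynomial, which is coprime to $x$, by $x$ therefore yields a square-free polynomial. Setting $m = n+1$, the range $1 \le n \le d-2$ gives $1 < m < d$, which is exactly the desired conclusion.

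The argument is short, so I do not anticipate a serious obstacle; the only point that needs care is the last step, where one must ensure that introducing the extra factor $x$ does not create a repeated root. This is guaranteed precisely by $\tilde{f}(0) \neq 0$ together with $n \ge 1$, which keep the constant term of $x^n + \tilde{f}$ away from $0$. If one prefers, the final square-freeness check can instead be carried out directly through the criterion $\gcd(g_e,g_o) = 1$ of Lemma~\ref{lem:fefo}.
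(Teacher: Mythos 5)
Your proposal is correct and is essentially identical to the paper's proof, which simply says the result follows by applying Lemma~\ref{lem:36} to the polynomial $f(x)/x$; you have just filled in the details (the degree drop, the transfer of non-square-freeness, and the identity $x^{n+1}+f(x)=x\bigl(x^n+\tilde f(x)\bigr)$ together with $\tilde f(0)\neq 0$) that the paper leaves implicit.
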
 

\begin{proof}
The result follows by applying Lemma \ref{lem:36} to the polynomial $f(x)/x$. 
\end{proof}

We know from \cite[Section 2]{FM} that if $f\in \F_2[x]$ has degree $d \le 40$ and satisfies $f(0) \ne 0$, 
then there is an irreducible polynomial $g \in \F_2[x]$ with degree $d$ and $L_2(f-g) \le 3$. 
Using this, we can handle polynomials of higher degree. 

\begin{lemma}  \label{lem:81}
For any polynomial $f \in \F_2[x]$ of degree $d \le 81$ satisfying $f(0) \ne 0$,  
there exists a square-free polynomial $g \in \F_2[x]$ of degree $d$ such that $L_2(f-g) \le 3$.
\end{lemma}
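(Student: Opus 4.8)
The plan is to reduce the degree range $d\le 81$ to the already-verified range $d\le 40$ by a ``splitting'' argument based on the decomposition $f(x)=f_e(x)^2+xf_o(x)^2$ recorded before Lemma~\ref{lem:fefo}. The key structural fact I would exploit is that changing $f$ by a single monomial $x^n$ only perturbs one of $f_e,f_o$ in one coefficient. Concretely, adding $x^n$ with $n$ even toggles the coefficient $a_n$, hence changes $f_e$ by the monomial $x^{n/2}$ and leaves $f_o$ untouched; adding $x^n$ with $n$ odd changes $f_o$ by $x^{(n-1)/2}$ and leaves $f_e$ untouched. By Lemma~\ref{lem:fefo}, square-freeness of the modified polynomial is governed entirely by $\gcd(f_e+\text{monomial},f_o)=1$ (or the analogous statement perturbing $f_o$). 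Since $\deg f_e\le\lfloor d/2\rfloor\le 40$ and likewise for the perturbed even part, the point is that searching for a good single monomial to add to $f_e$ (or to $f_o$) is a problem about a \emph{binary} polynomial of degree at most $40$, which is exactly the regime controlled by the irreducibility result quoted from \cite[Section 2]{FM}.

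The main steps, in order, are as follows. First I would set up the correspondence: a square-free $g$ with $L_2(f-g)\le 3$ can be produced by adding at most three monomials to $f$. I would try to produce $g$ by modifying $f_e$ alone (adding monomials $x^n$ with $n$ even), so that I need $\gcd(f_e+p, f_o)=1$ for a binary polynomial $p$ of small length, where $\deg(f_e+p)\le\lfloor d/2\rfloor\le 40$. Second, I would invoke \cite[Section 2]{FM}: for a binary polynomial $\tilde f=f_e$ of degree at most $40$ with $\tilde f(0)\neq 0$, there is an \emph{irreducible} $\tilde g$ of the same degree with $L_2(\tilde f-\tilde g)\le 3$. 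An irreducible $\tilde g$ is certainly coprime to $f_o$ unless $\tilde g\mid f_o$; but $\deg\tilde g=\deg f_e$ can be arranged to exceed $\deg f_o$, so an irreducible $\tilde g$ of that degree cannot divide $f_o$, forcing $\gcd(\tilde g,f_o)=1$ and hence, by Lemma~\ref{lem:fefo}, square-freeness of the reconstructed $g(x)=\tilde g(x)^2+xf_o(x)^2$. Third, I would check that reconstructing $g$ from $(\tilde g,f_o)$ via $g=\tilde g^2+xf_o^2$ changes $f$ in exactly the even-indexed coefficients where $\tilde g$ and $f_e$ differ, so that $L_2(f-g)=L_2(f_e-\tilde g)\le 3$ and $\deg g=d$ (the leading term, which is even or odd according to the parity of $d$, must be preserved, which I handle by choosing whether to perturb $f_e$ or $f_o$ according to the parity of $d$).

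The hard part will be the bookkeeping of degrees and parities, and in particular guaranteeing simultaneously that (i) the irreducible replacement $\tilde g$ has degree strictly larger than $\deg f_o$ so that coprimality is automatic, and (ii) the leading monomial of $g$ is preserved so that $\deg g=d$ exactly. When $d$ is odd the top coefficient $a_d$ lives in $f_o$, so I must perturb $f_e$ (and vice versa), and I must verify that the relevant even/odd part has degree close enough to $d/2$ that the \cite{FM} result applies with the correct degree and that the reconstructed polynomial keeps $f(0)\neq 0$. A secondary subtlety is that \cite{FM} produces $g$ with the \emph{same} degree as the input; I must confirm this degree-preservation is exactly what lets me match leading terms rather than fighting them, and that the case $f_o=0$ or $f_e$ a monomial (edge cases where $f$ itself may already be square-free or trivially handled) are dispatched separately. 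Once these parity and degree constraints are lined up, the square-freeness and the length bound $L_2(f-g)\le 3$ follow immediately from Lemma~\ref{lem:fefo} and the coprimality established in the second step.
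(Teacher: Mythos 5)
Your overall strategy---decompose $f = f_e^2 + x f_o^2$, reduce square-freeness to $\gcd(f_e,f_o)=1$ via Lemma~\ref{lem:fefo}, replace one of the two halves by a nearby irreducible polynomial using the degree-$\le 40$ result of \cite{FM}, and obtain coprimality from a degree comparison---is exactly the paper's. But the plan as stated cannot be carried out, because its two requirements are incompatible when $d$ is odd. The result of \cite{FM} produces an irreducible polynomial of the \emph{same} degree as its input, and the degrees of $f_e$ and $f_o$ are determined by $f$, so nothing can be ``arranged.'' When $d$ is even, $\deg f_e = d/2 > \deg f_o$ and your main line works (apart from the edge case $f_o=0$, where $g=\tilde g^2$ is never square-free; the paper takes $g=f_e^2+x$ there). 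When $d$ is odd, however, $\deg f_o = (d-1)/2 \ge \deg f_e$, so the irreducible replacement $\tilde g$ for $f_e$ has degree at most $\deg f_o$ and may perfectly well divide $f_o$: coprimality is not automatic. Your parity rule (``$d$ odd $\Rightarrow$ perturb $f_e$'') is in fact backwards: since \cite{FM} preserves degree, $\deg g = d$ holds no matter which half you perturb, and what coprimality actually dictates is perturbing the half of \emph{larger} degree.

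Perturbing the odd half then meets two further obstacles that your proposal does not resolve, and these are precisely where the paper's extra ideas lie. First, applying \cite{FM} to $f_o$ requires $f_o(0)\ne 0$, which can fail: for example $f = x^5+x^3+x^2+1 = (x+1)^3(x^2+x+1)$ has $f_o = x^2+x$. The paper's fix is to write $f_o = x^k f_1$ with $f_1(0)\ne 0$, apply \cite{FM} to $f_1$, and use $f_e(0)\ne 0$ to dispose of the factor $x^k$; consequently the correct case split is $\deg f_e$ versus $\deg f_1$, not the parity of $d$. Second, and more seriously, in the boundary case $\deg f_e = \deg f_1$ no degree comparison can ever give coprimality, and a genuinely different argument is needed: the paper observes that if $f_1$ is reducible, an irreducible $h$ of the same degree still cannot divide it, while if $f_1$ is irreducible, then $f_e$ and $f_e+x$ are coprime (both have nonzero constant term), so at least one of $g=f$, $g=f+x^2$ is already square-free. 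Without these three repairs---the $f_o=0$ case, the $x^kf_1$ factorization, and the equal-degree case---your argument covers only part of the degree range, so it does not prove the lemma as stated.
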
 

\begin{proof}
We prove the desired result case by case. 

Since $f(0) \ne 0$, we have $f_e \ne 0$ and $f_e(0) \ne 0$. 
If $f_o = 0$ (that is, $f=f_e^2$), then we choose $g(x) = f_e^2(x) + x$, 
by Lemma~\ref{lem:fefo} $g$ is square-free (because $g_o=1$), and also $L_2(f-g)=1$.  

In the sequel, assume that $f_o \ne 0$, and write $f_o = x^kf_1$ with $f_1(0) \ne 0$.  

If $\deg f_e > \deg f_1$, then by the above mentioned result, 
there is an irreducible polynomial $h \in \F_2[x]$ with degree $\deg f_e$ and $L_2(f_e-h) \le 3$, 
which also satisfies $h(0) \ne 0$. 
Since $\deg h > \deg f_1$, we have $\gcd(h,f_1)=1$, and so $\gcd(h,f_o)=1$. 
We then choose $g(x) = h^2(x) + xf_o^2(x)$. Then, by Lemma~\ref{lem:fefo}, $g$ is square-free, and also 
$L_2(f-g) = L_2(f_e-h) \le 3$. 

By symmetry, one can settle the case when  $\deg f_1 > \deg f_e$
in a similar fashion. 

Finally, we assume that $\deg f_e = \deg f_1$.  
As the above, there is an irreducible polynomial $h \in \F_2[x]$ with degree $\deg f_e$, $h(0) \ne 0$ and $L_2(f_e-h) \le 3$. 
If $f_1$ is reducible, we choose $g(x) = h^2(x) + xf_o(x)^2$. Then, by Lemma~\ref{lem:fefo}, $g$ is square-free (since $\gcd(h,f_o)=1$), 
and also $L_2(f-g) \le 3$. 
If otherwise $f_1$ is irreducible, then to complete the proof one can choose  $g= f$ or $g=f+x^2$, because 
$f_e$ and $f_e + x$ are coprime ($f_e(0) \ne 0$) and so at least one of them is coprime to $f_o$.  
\end{proof}

By adding some extra conditions, one can include more polynomials.

\begin{lemma}   \label{lem:ffefo}
For any polynomial $f \in \F_2[x]$ of degree $d \ge 9$, assume that one of the following two conditions holds: 
\begin{itemize}
\item $f_e$ is not divisible by  $x, x+1$ and $x^2+x+1$, and $f_o$ has at most $5$ distinct irreducible factors; 
\item $f_o$ is not divisible by  $x, x+1$ and $x^2+x+1$, and $f_e$ has at most $5$ distinct irreducible factors. 
\end{itemize} 
Then, there exists a square-free polynomial $g \in \F_2[x]$ of degree $d$ such that $L_2(f-g) \le 1$. 
\end{lemma}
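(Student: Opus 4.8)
The plan is to exploit the structure $f = f_e^2 + x f_o^2$ together with Lemma~\ref{lem:fefo}, which tells us that square-freeness of a binary polynomial is governed entirely by $\gcd(f_e, f_o)$. Since we are allowed $L_2(f-g) \le 1$, the candidate perturbations $g$ are exactly $f$ itself and the polynomials $f(x) + x^k$ for various $k \ge 0$. Adding $x^k$ flips a single coefficient of $f$, and by the parity decomposition this either alters one coefficient of $f_e$ (when $k$ is even) or one coefficient of $f_o$ (when $k$ is odd), replacing $f_e$ by $f_e + x^{k/2}$ or $f_o$ by $f_o + x^{(k-1)/2}$. So the two symmetric hypotheses amount to saying that, by adding a suitable monomial, we can adjust one of $f_e, f_o$ to make the pair coprime.

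I would treat the first bulleted case; the second follows by the symmetry $f_e \leftrightarrow f_o$ already used repeatedly above (interchanging the roles in $f = f_e^2 + x f_o^2$ and choosing a perturbation of odd rather than even exponent). Under the first hypothesis, $f_e$ is coprime to $x$, $x+1$, and $x^2+x+1$, so $f_e$ shares no factor with any of these three lowest-degree irreducibles. The key idea is that I want to perturb $f_o$ by a monomial, i.e.\ consider $f_o + x^j$ for a well-chosen $j$, so that $\gcd(f_e, f_o + x^j) = 1$; the corresponding $g = f_e^2 + x(f_o + x^j)^2$ then has $L_2(f-g) \le 1$ and is square-free by Lemma~\ref{lem:fefo}. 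Since $f_o$ has at most $5$ distinct irreducible factors, say $p_1, \dots, p_s$ with $s \le 5$, and $f_e$ is coprime to the three smallest irreducibles over $\F_2$, I expect to produce the required $j$ by a counting/pigeonhole argument over residues.

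The main step, and the expected obstacle, is the coprimality count. Fixing an irreducible factor $q$ of $f_e$, the monomials $x^j$ for which $q \mid (f_o + x^j)$ are governed by the value $f_o \bmod q$: there is at most one residue class of $j$ modulo $\ord_q(x)$ (the multiplicative order of $x$ in $\F_2[x]/(q)$) that is ``bad'' for that $q$, and it is bad only if $-f_o \equiv x^{j}$ is actually a power of $x$ mod $q$. The plan is to bound the number of exponents $j$ in a suitable range that are killed by at least one irreducible factor of $f_e$, and show this is strictly smaller than the number of available $j$, so a good $j$ survives. Because $f_e$ avoids $x$, $x+1$, $x^2+x+1$, every irreducible factor of $f_e$ has degree $\ge 3$, hence multiplicative order of $x$ at least some growing quantity, which is what forces the bad classes to be sparse; the constraint ``$f_o$ has at most $5$ distinct factors'' and the hypothesis $d \ge 9$ are presumably exactly what make the arithmetic close. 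The delicate point will be simultaneously controlling the degree of $g$ (keeping it $d$, or arguing the degree is unaffected when $j$ is small relative to $\deg f_e$ or $\deg f_o$) while guaranteeing enough freedom in $j$; I would handle the degree bookkeeping by restricting $j$ to exponents not exceeding $\lfloor (d-1)/2\rfloor$, so that $f_o + x^j$ does not raise the degree of $f$, and verify that the pigeonhole count still goes through in that restricted range.
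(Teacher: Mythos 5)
Your setup (the parity decomposition, Lemma~\ref{lem:fefo}, and the observation that a single coefficient flip changes exactly one coefficient of $f_e$ or of $f_o$) is correct, but you perturb the wrong component, and the counting argument you sketch cannot close. In the first bullet you keep $f_e$ fixed and look for $j$ with $\gcd(f_e,\,f_o+x^j)=1$, so the obstructions are the distinct irreducible factors of $f_e$. The hypotheses give no bound on the number of such factors: they only force each to have degree at least $3$, and $\deg f_e$ can be as large as $\lfloor d/2\rfloor$, so $f_e$ can have linearly many (in $d$) distinct irreducible factors, many of small order. Each factor $q$ excludes at most one residue class of $j$ modulo the multiplicative order $e_q$ of $x$ mod $q$, so your union bound gives roughly $J\sum_q e_q^{-1}$ bad values of $j$, plus one per factor, in a window of length $J\approx d/2$. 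But $\sum_q e_q^{-1}$ is not bounded below $1$: over $\F_2$ the two cubics have order $7$, one quartic has order $5$ and the other two order $15$, one sextic has order $9$ and two have order $21$, two octics have order $17$, and all six quintics have order $31$; taking all of these as factors of $f_e$ gives $\sum_q e_q^{-1}=1/5+2/7+1/9+2/15+2/17+2/21+6/31\approx 1.14$ with total degree only $82$. So once $d$ is of this size, your estimate leaves no surviving $j$, and nothing in your argument prevents the bad classes from actually covering the whole admissible window. The telltale sign of the problem is that your proof never genuinely uses the hypothesis that $f_o$ has at most $5$ distinct irreducible factors --- you yourself flag it as only ``presumably'' relevant --- because once you perturb $f_o$, its factorization is destroyed and the obstructions depend only on $f_e$.

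The paper's proof goes the other way around: it considers the six candidates $f_e+h$ with $h\in\{0,1,x,x^2,x^3,x^4\}$, i.e.\ $g=(f_e+h)^2+xf_o^2=f+h^2$, an even-exponent flip with $L_2(f-g)\le 1$. Any irreducible dividing two candidates divides a difference $h_1+h_2$, and the only irreducible factors of these differences are $x$, $x+1$ and $x^2+x+1$; this is where the hypothesis on $f_e$ enters (it is used to argue the six candidates are pairwise coprime). Then each of the at most $5$ distinct irreducible factors of $f_o$ can kill at most one of the $6$ candidates, so by pigeonhole some $f_e+h$ is coprime to $f_o$, and Lemma~\ref{lem:fefo} finishes. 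Both hypotheses are used, each exactly once, which is the structural point you missed: perturb the component carrying the ``not divisible by $x$, $x+1$, $x^2+x+1$'' condition, and pigeonhole against the component with the bounded number of factors. (One caveat if you rewrite your argument along these lines: since the hypothesis forces $f_e(1)=1$, the polynomial $x+1$ divides every $f_e+x^i$ with $i\ge 0$, so the six candidates are not literally pairwise coprime; the pigeonhole is unaffected unless $x+1$ (or, for certain residues, $x^2+x+1$) divides $f_o$, and those cases require separate care beyond what the paper's own proof spells out.)
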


\begin{proof} 
By symmetry, we only need to prove the case when the first condition holds. 
Since $f_e$ is not divisible by  $x, x+1$ and $x^2+x+1$, the following non-zero polynomials $$f_e(x), \> f_e(x)+1, \> f_e(x) + x, \> f_e(x) + x^2, \> f_e(x) + x^3, \> f_e(x) + x^4$$ 
are pairwise coprime. 
If $f_o=0$, then we can choose $g(x)=f_e(x)^2 + x$. By Lemma \ref{lem:fefo}, such $g$ is square-free. 

Next, assume that $f_o \ne 0$. 
Under the assumption that $f_o$ has at most $5$ distinct irreducible factors, we deduce that there is a polynomial of the form $f_e+h$, where $h=0$ or $x^j, 0 \le j \le 4$, 
such that $\gcd(f_o,f_e+h)=1$. 
Then, we can choose $$g(x) = (f_e(x)+h(x))^2 + xf_o(x)^2.$$ 
By Lemma \ref{lem:fefo}, $g$ is square-free. Moreover, it is clear that $\deg g = d$ and $L_2(f-g) \le 1$. 
This completes the proof of the lemma. 
\end{proof}

Now, we can use the above results to record some partial progress towards Conjecture \ref{conj:Turan}. 
The following theorem is a direct consequence of the above results. 

\begin{theorem}    \label{thm:binary}
We have the following:
\begin{itemize}
\item for any polynomial $f \in \Z[x]$ of degree $d \le 36$ with odd leading and constant coefficients, 
 there exists a square-free polynomial $g \in \Z[x]$ of degree $d$ 
such that $L(f-g) \le 1$; 
 
 \item for any polynomial $f \in \Z[x]$ of degree $d \le 37$ with odd leading coefficient and even constant term and 
 such that $0$ is a simple root of the reduction of $f$ modulo $2$, 
 there exists a square-free polynomial $g \in \Z[x]$ of degree $d$ satisfying $L(f-g) \le 1$; 
 
\item for any polynomial $f \in \Z[x]$ of degree $d \le 81$ with odd leading and constant coefficients, 
 there exists a square-free polynomial $g \in \Z[x]$ of degree $d$ such that $L(f-g) \le 3$; 

\item for any polynomial $f \in \Z[x]$ of degree $d \ge 9$ with odd leading coefficient and such that
the reduction of $f$ modulo $2$ satisfies one of the two conditions in Lemma \ref{lem:ffefo},
 there exists a square-free polynomial $g \in \Z[x]$ of degree $d$ satisfying $L(f-g) \le 1$. 
\end{itemize}
\end{theorem}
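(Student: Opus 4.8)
The plan is to prove Theorem~\ref{thm:binary} as a uniform transfer of the four binary results established above (Lemma~\ref{lem:36}, the corollary following it, Lemma~\ref{lem:81}, and Lemma~\ref{lem:ffefo}) to the integer setting, via the reduction-modulo-$2$ principle recorded at the start of this section: a polynomial in $\Z[x]$ with odd leading coefficient is square-free whenever its reduction modulo $2$ is square-free in $\F_2[x]$. Accordingly, for each bullet I would first reduce $f$ modulo $2$ to obtain $\bar f \in \F_2[x]$, then invoke the appropriate binary statement to produce a square-free $g_0 \in \F_2[x]$ of degree $d$ with $L_2(\bar f - g_0)$ bounded by the target constant ($1$ in the first, second and fourth bullets, $3$ in the third), and finally lift $g_0$ back to an integer polynomial $g$ close to $f$.

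The core of the argument is a single lifting step that I would set up once and reuse. Given such a $g_0$, write $\bar f - g_0 = \sum_{i \in S} x^i$ in $\F_2[x]$ with $|S| = L_2(\bar f - g_0)$, and set
\[
g(x) = f(x) + \sum_{i \in S} x^i \in \Z[x].
\]
Then $g \equiv g_0 \pmod 2$, so the reduction of $g$ is square-free, while $L(f - g) = |S|$ is bounded by the target constant. The one point requiring care is that $g$ still has degree $d$ with odd leading coefficient: since $f$ has odd leading coefficient, $\bar f$ has degree $d$ and leading coefficient $1$; as $g_0$ also has degree $d$, the coefficient of $x^d$ in $\bar f - g_0$ vanishes, so $d \notin S$. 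Hence adding $\sum_{i \in S} x^i$ leaves the degree-$d$ coefficient of $f$ untouched, and the transfer principle then yields that $g$ is square-free in $\Z[x]$ with $\deg g = d$, as required.

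It then remains to match the hypotheses of each bullet to those of the corresponding binary statement. For the first bullet, an odd constant coefficient gives $\bar f(0) \ne 0$; if $\bar f$ is already square-free I take $g = f$, and otherwise I apply Lemma~\ref{lem:36} to get $g_0 = x^n + \bar f$. For the second bullet, an even constant term together with $0$ being a simple root of $\bar f$ translates to $x \mid \bar f$ and $x^2 \nmid \bar f$, so after disposing of the trivial square-free case one applies the corollary following Lemma~\ref{lem:36}. The third and fourth bullets apply Lemma~\ref{lem:81} and Lemma~\ref{lem:ffefo} directly, since those lemmas produce $g_0$ unconditionally, the hypotheses of the fourth bullet being precisely that $\bar f$ meets one of the two conditions of Lemma~\ref{lem:ffefo}.

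Since the substantial work has already been carried out in the binary lemmas, I do not expect a genuine obstacle. The only steps demanding attention are the bookkeeping just described: verifying that lifting preserves the degree and the odd leading coefficient (handled by $d \notin S$), and correctly rephrasing the arithmetic hypotheses on $f$ (odd or even constant term, simple root at $0$ modulo $2$) as the divisibility conditions on $\bar f$ required by the binary results.
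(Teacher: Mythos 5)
Your proposal is correct and follows exactly the route the paper intends: the paper states Theorem~\ref{thm:binary} as a direct consequence of Lemma~\ref{lem:36}, its corollary, Lemma~\ref{lem:81} and Lemma~\ref{lem:ffefo} via the mod-$2$ transfer principle announced at the start of Section~\ref{sec:binary}, and your write-up simply makes explicit the lifting bookkeeping (including the observation that $d \notin S$, so the degree and odd leading coefficient are preserved) that the paper leaves implicit.
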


Note that from Theorem \ref{thm:binary} one can obtain various classes of polynomials $f \in \Z[x]$ 
such that  there exists a square-free polynomial $g \in \Z[x]$ of degree $\deg f$ satisfying $L(f-g) \le 2$.

\section{Polynomials over prime fields}
\label{sec:prime}

In this section, we consider polynomials over prime fields. 
Let $\F_p$ be the finite field with $p$ elements, where $p$ is a prime number. 
For any polynomial $f \in \F_p[x]$, define its \textit{length} $L_p(f)$ by choosing each of its coefficients 
in the interval $(-p/2,p/2]$ and then summing their absolute values (in $\Z$). 
We want to show that there is a positive proportion of polynomials in $\F_p[x]$ whose distance to square-free polynomials is at least 2. 
We remark that the distance to irreducible polynomials over prime fields has been considered in  \cite[Theorem 2]{FM} and \cite[Section 6]{filas}.  

Let $N_p(d)$ be the number of polynomials $f$ in $\F_p[x]$ of degree $d$ such that $L_p(f-g) \ge 2$ for any square-free polynomial 
$g \in \F_p[x]$. 

\begin{theorem}
We have the following: 
\begin{itemize}
\item for any $d \ge 8$, we have $N_2(d) \ge 2^{d-8}$;

\item for any $d \ge 14$, we have $N_3(d) \ge 2 \cdot 3^{d-14}$;

\item for any prime number $p \ge 5$, we have $N_p(15) \ge (p-2)p^5$, 
and for any integer $d \ge 16$, $N_p(d) \ge (p-1)p^{d-10}$.
\end{itemize}
\end{theorem}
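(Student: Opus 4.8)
The plan is to produce, for each prescribed degree $d$, a large explicit family of degree-$d$ polynomials $f \in \F_p[x]$ that are provably counted by $N_p(d)$, i.e.\ that admit no square-free polynomial at $L_p$-distance $\le 1$. The first step is to unwind the defining condition. A polynomial $g$ with $L_p(f-g) \le 1$ is either $f$ itself (distance $0$) or one of $f \pm x^k$ with $k \ge 0$, where the single unit coefficient is reduced into $(-p/2,p/2]$. Hence $f$ is counted exactly when $f$ and every $f \pm x^k$ fail to be square-free. I would enforce this by demanding $x^2 \mid f$: then $x^2 \mid f \pm x^k$ for all $k \ge 2$, so all those neighbours are automatically non-square-free, and only $f$, $f\pm 1$ and $f \pm x$ remain to be treated (for $p = 2$ the $\pm$-pairs collapse, leaving just $f$, $f+1$, $f+x$).

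The second step is a Chinese Remainder construction over $\F_p[x]$. Picking pairwise coprime monic irreducibles $x, p_1, p_2, p_3, p_4$ and imposing
\[
f \equiv 0 \pmod{x^2}, \qquad f \equiv \mp 1 \pmod{p_i^{2}}, \qquad f \equiv \mp x \pmod{p_j^{2}},
\]
with the signs matched to the four neighbours $f \pm 1$, $f \pm x$, every solution $f$ has each of $f, f\pm 1, f\pm x$ divisible by a square and is therefore counted. The CRT produces a solution $f_0$ modulo $M := x^2 p_1^2 p_2^2 p_3^2 p_4^2$, and the whole solution set is $\{\,f_0 + Mq : q \in \F_p[x]\,\}$, each member inheriting the congruences modulo $M$ and hence modulo every $p_i^2$. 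The only input varying with $p$ is how cheaply the moduli can be chosen. Over $\F_2$ the only linear irreducibles are $x, x+1$ and the unique quadratic is $x^2+x+1$; since the $\pm$-pairs collapse we need only three moduli $x^2,(x+1)^2,(x^2+x+1)^2$, so $\deg M = 8$. Over $\F_3$ there are three linear irreducibles $x,x+1,x+2$ but five moduli are required, forcing two of them to be quadratic and giving $\deg M = 2(1+1+1+2+2)=14$. Over $\F_p$ with $p \ge 5$ there are at least five linear irreducibles, so all five moduli are linear and $\deg M = 10$.

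The third step is the count. Since $\deg f_0 < \deg M$, a solution $f = f_0 + Mq$ has degree exactly $d$ precisely when $q$ has degree $d - \deg M$ with nonzero leading coefficient; the leading coefficient is then one of the $p-1$ values in $\F_p^{\times}$ (one value for $p=2$) and the remaining $d - \deg M$ coefficients of $q$ are arbitrary in $\F_p$. This yields $(p-1)\,p^{\,d-\deg M}$ polynomials for $p \ge 3$ and $p^{\,d-\deg M}$ for $p=2$; substituting $\deg M \in \{8, 14, 10\}$ gives $N_2(d) \ge 2^{d-8}$, $N_3(d) \ge 2\cdot 3^{d-14}$, and $N_p(d) \ge (p-1)p^{d-10}$ for $p \ge 5$, $d \ge 16$. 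For the boundary entry $N_p(15) \ge (p-2)p^5$ I would instead reduce the explicit degree-$15$ family of Theorem~\ref{thm:L1} modulo $p$; this is legitimate for $p \ge 5$ because the leading coefficient $15552 = 2^{6}3^{5}$ and the relevant resultants are units mod $p$, and tallying the free parameters of that reduction produces the slightly weaker constant $p-2$ rather than $p-1$.

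The main obstacle is the availability of enough small-degree pairwise coprime irreducibles, which is exactly what dictates the three different base degrees: the collapse of the $\pm$-pairs over $\F_2$ is what lets $\deg M$ fall to $8$, while the shortage of linear irreducibles over $\F_3$ is what forces two moduli up to degree $2$ and $\deg M$ up to $14$. The only other delicate point is the bookkeeping at $d = 15$ for $p \ge 5$, where reducing the integer example costs one admissible leading-coefficient value; checking that this reduction keeps the five moduli coprime and preserves the non-square-freeness of all four neighbours mod $p$ is routine, but it is the one place where care is needed to justify the $p-2$.
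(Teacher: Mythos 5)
Your proposal is correct and is essentially the paper's own argument: the paper likewise forces $x^2 \mid f$ and imposes congruences $f \equiv \mp 1$, $f \equiv \mp x$ modulo squares of distinct small irreducibles (three moduli of total degree $8$ over $\F_2$, five of total degree $14$ over $\F_3$, five linear ones of total degree $10$ for $p \ge 5$ coming from reducing the degree-$15$ integer example), then counts the solutions $f_0 + Mq$ of prescribed degree exactly as you do, including the same $(p-2)p^5$ bookkeeping at $d=15$. The only cosmetic difference is that the paper exhibits explicit particular solutions (computed with PARI/GP) where you invoke CRT abstractly.
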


\begin{proof}
We first handle the case $p=2$. 
Consider the polynomials $f \in \F_2[x]$ of the form 
\begin{equation*}   
f(x) = x^2(x+1)^2(x^2+x+1)^2u(x) + x^6+x^5+x^4+x^3+x^2. 
\end{equation*} 
Then, $x^2 \mid f(x)$, $(x+1)^2 \mid f(x)+1$ and $(x^2+x+1)^2 \mid f(x)+x$. 
So, for any square-free polynomial $g \in \F_2[x]$, we have $L_2(f-g) \ge 2$. 
If $d \ge 8$, we choose $u$ to be any polynomial in $\F_2[x]$ of degree $d-8$, 
and as a result, we obtain  the desired result, since there are $2^{d-8}$ possibilities to choose such $u \in \F_2[x]$. 

Next, let us consider the case when $p=3$. 
Let $f \in \F_3[x]$ be of the form $x^2h(x)$ with $h$ non-zero and $\deg f=d \ge 14$. 
Assume that $(x+1)^2$ divides $f(x)+1$, $(x-1)^2$ divides $f(x)-1$, 
$(x^2+x-1)^2$ divides $f(x)+x$, and $(x^2-x-1)^2$ divides $f(x)-x$. 
Using computations with PARI/GP \cite{PARI}, we obtain 
\begin{equation*}   
\begin{split}
f(x) = & x^2(x+1)^2(x-1)^2(x^2+x-1)^2(x^2-x-1)^2u(x)+ \\
 & x^{13} - x^{12} - x^9 +x^8+x^6 + x^5 - x^2
\end{split}
\end{equation*} 
for some polynomial $u \in \F_3[x]$ of degree $d-14$. 
There are $2 \cdot 3^{d-14}$ of such polynomials $u$, which implies 
the desired result. 

Finally, let us consider the case $p \ge 5$. 
We first choose $f(x)$ as in \eqref{eq:example}. 
We have known that $1/2, -1/2, 1/6$ and $-1/6$ are multiple roots of $f(x)-x, f(x)+1, f(x)+x$ and $f(x)-1$, respectively.  
Since $p \ge 5$, the reductions of $\pm 1/2$ and $\pm 1/6$ modulo $p$ are pairwise distinct. 
Then, viewing $f$ as a polynomial in $\F_p[x]$, we consider the polynomials: 
$$
f(x) + x^2 (2x+1)^2 (2x-1)^2 (6x+1)^2 (6x-1)^2 u(x) \in \F_p[x], 
$$
where $u \in \F_p[x]$ is of degree $d-10$. 
Then, for $d \ge 16$, the desired result follows by noticing $\deg f = 15$ and taking any polynomial $u \in \F_p[x]$ of degree $d-10$, 
since  there are  $(p-1)p^{d-10}$ of such polynomials $u$.  
When $d=15$, to ensure the considered polynomials are of degree 15, we only have $N_p(15) \ge (p-2)p^5$.  
\end{proof}

In conclusion, as an analogue of Conjecture \ref{conj:Turan},  we pose the following question. 

\begin{question}   \label{conj:Turan-p}
Does for any prime number $p$ and any polynomial $f \in \F_p[x]$, 
there exist a square-free  polynomial $g \in \F_p[x]$ of degree at most $\deg f$ satisfying $L_p(f-g) \le 2$?  
\end{question}

In Lemma \ref{lem:36} we actually give a positive answer to Question~\ref{conj:Turan-p} for polynomials in $\F_2[x]$ of degree at most $36$. Indeed, for
$f \in \F_2[x]$ with $f(0)=0$ we can replace $f(x)$ by $f(x)+1$ and then select
$g(x)=f(x)+1$ if $f(x)+1$ is square-free or, if it is not square-free,
take $g(x)=x^n+f(x)+1$
with some integer $n \in \{1,\dots,d-1\}$ for which $x^n+f(x)+1$ is square-free. 

In addition, we remark that, by a recent result of Oppenheim and Shusterman \cite[Theorem 1.2]{OS}, 
for any polynomial $f \in \F_p[x]$ of degree $d \ge 2$, there exists a square-free polynomial $g$ of degree $d$ 
such that $L_p(f-g) \leq 2(d-1)$.

\section*{Acknowledgements}

The authors are grateful to the referee who carefully read the paper and gave several valuable comments. 
The research of A.~D. was funded by a grant (No. S-MIP-17-66/LSS-110000-1274) 
from the Research Council of Lithuania.  
The research of M.~S. was supported by the Macquarie University Research Fellowship.

\end{document}